\def\cal{\mathcal}
\newcommand{\nn}[1]{(\ref{#1})}
\newcommand{\sfrac}[2]{{\textstyle \frac{#1}{#2}}}
\newcommand{\Proj}{\operatorname{Proj}}
\newcommand{\Ric}{\operatorname{Ric}}
\newcommand{\si}{\sigma}
\newcommand{\bg}{\mbox{\boldmath{$ g$}}}
\newcommand{\g}{g_+}
\def\la{\lambda}
\def\tlongrightarrow{\relbar\joinrel\relbar\joinrel
                        \relbar\joinrel\relbar\joinrel\longrightarrow}
\def\pkm{P_{k,{\bf m},m_j\,,k-1-m_j}}
\begin{document}

\allowdisplaybreaks
	
\renewcommand{\PaperNumber}{100}

\FirstPageHeading

\renewcommand{\thefootnote}{$\star$}

\ShortArticleName{Conformal Dirichlet--Neumann Maps and Poincar\'e--Einstein Manifolds}

\ArticleName{Conformal Dirichlet--Neumann Maps\\ and Poincar\'e--Einstein Manifolds\footnote{This paper is a
contribution to the Proceedings of the 2007 Midwest
Geometry Conference in honor of Thomas~P.\ Branson. The full collection is available at
\href{http://www.emis.de/journals/SIGMA/MGC2007.html}{http://www.emis.de/journals/SIGMA/MGC2007.html}}}

\Author{A.\ Rod GOVER}

\AuthorNameForHeading{A.R. Gover}

\Address{Department of Mathematics,
  The University of Auckland,\\
  Private Bag 92019,
  Auckland 1,
  New Zealand}
\Email{\href{mailto:gover@math.auckland.ac.nz}{gover@math.auckland.ac.nz}}

\URLaddress{\url{http://www.math.auckland.ac.nz/~gover/}}

\ArticleDates{Received October 07, 2007; Published online October 21, 2007}

\Abstract{A conformal description of Poincar\'e--Einstein manifolds is
  developed: these structures are seen to be a special case of a
  natural weakening of the Einstein condition termed an almost
  Einstein structure. This is used for two purposes: to shed light on
  the relationship between the scattering construction of
  Graham--Zworski and the higher order conformal Dirichlet--Neumann maps
  of Branson and the author; to sketch a new construction of non-local
  (Dirichlet--to--Neumann type) conformal operators between tensor bundles.}

\Keywords{conformal dif\/ferential geometry; Dirichlet--to--Neumann maps}

\Classification{58J40; 53A30; 58J32}


\section{Introduction}

A Poincar\'e--Einstein manifold is a compact manifold with boundary
equipped with a negative scalar curvature Einstein metric that is
``conformally compact''; that is, it has a certain conformal scale
singularity at the boundary. The model that this generalises is the
dimension $n+1$ hyperbolic ball with the sphere $\mathbb{S}^n$ as the
conformal boundary at inf\/inity. As in the model case, the
Poincar\'e--Einstein boundary has a conformal structure and a central
theme in the study of these is to relate this to the Riemannian
geometry of the interior. Much of the motivation has come from
physics, in particular in connection with the so-called AdS/CFT
(Anti-de Sitter/Conformal Field Theory) correspondence suggested by
Maldacena \cite{Mal,Witten}. A number of purely geometric questions
arise and these have generated signif\/icant mathematical interest, see
for example \cite{Albin,And,CQYrenromvol}.

In the seminal work \cite{GrZ} Graham and Zworski developed the
scattering theory for (asymptoti\-cally) Poincar\'e--Einstein manifolds
and its use for describing conformal objects on the boundary. This
exploited the available picture for the scattering theory of inf\/inite
volume hyperbolic quotients (see e.g.~\cite{Perry} and references
therein), and for asymptotically hyperbolic manifolds (e.g.~\cite{JoSB}) as well as the results for the spectrum of the Laplacian on
these due to Mazzeo and Mazzeo--Melrose \cite{MazzeoH,MazzeoU,MazMel}.

A focus of \cite{GrZ} was to extract and study, via the scattering
machinery, the conformal Laplacian type operators of \cite{GJMS} as
well as Branson's $Q$-curvature \cite{Tomsharp}. However the scattering
operator they construct is an essentially global object and, as they
point out, may be viewed as a generalised Dirichlet--to--Neumann map.
This picture of the operator is especially relevant for certain real
values of the spectral parameter and this is a point we wish to
underscore here. In particular one aim here is to shed light on the
relationship between the Graham--Zworski construction and the conformal
Dirichlet--to--Neumann maps constructed by Branson with the author
\cite{BrGonon}.  In the spirit of the standard Dirichlet--to--Neumann
construction the latter are based around elliptic source problems and
target problems; the interior operators are conformal Laplacian type
operators of the form $\Delta^k+ lower~order~terms$, developed with
Eastwood \cite{GoSrni99}, and the the boundary operators are derived
from the conformal tractor calculus of hypersurface type submanifolds.
Both the scattering operator and the Dirichlet--to--Neumann maps of
\cite{BrGonon} are conformally invariant. In the homogeneous setting
this translates to the fact that they intertwine principal series
representations and so, by naturality and the uniqueness of the
intertwinors concerned \cite{specgen}, it would seem that in this
setting they should agree.  Here we make steps toward an explicit
matching of the constructions.
 As background, the basic conformal machinery is revised in
the next section and then, in Section \ref{DNBG}, we outline the
construction of the conformal Dirichlet--to--Neumann operators from
\cite{BrGonon}.

Let $M$ be a compact smooth manifold with boundary $\Sigma=\partial
M$. A metric $\g$ on the interior~$M_+$ of~$M$ is said to be
conformally compact if it extends (with some specif\/ied regularity) to~$M$ by $g=x^2\g$ where $g$ is non-degenerate up to the boundary, and
$x$ is a non-negative def\/ining function for the boundary (i.e.\
$\Sigma$ is the zero set for $x$, and $d x$ is non-vanishing on
$\Sigma$).  In this situation the metric $\g$ is complete and the
restriction of $g$ to $T\Sigma $ in $TM|_\Sigma$ determines a~conformal structure that is independent of the choice of def\/ining
function $x$; the latter is termed the conformal inf\/inity of $M_+$.
If the def\/ining function is chosen so that $|dx|=1$ (with respect to~$g$) along $\Sigma$ then the sectional curvatures tend to $-1$ at
inf\/inity \cite{MazzeoH} and the structure is said to be asymptotically
hyperbolic.  The manifold is said to be a Poincar\'e--Einstein
structure if, in addition, the interior metric $\g$ is Einstein, with
$\Ric (\g)=-n \g$.  For simplicity of presentation, we shall treat
all structures as smooth. It is straightforward to adapt the results
to the case of limited regularity.  We shall also conf\/ine our
discussion to metrics and conformal structures of positive def\/inite
signature.

Since the ``boundary singularity'' of the metric $\g$ is conformal in
nature, it is natural to explore the meaning of the
Poincar\'e--Einstein structure in terms of the conformal geometry of
$M$. An elegant picture emerges, and this is the subject of Section
\ref{PEgen}.  The equation controlling whether or not a metric is
conformally Einstein is a second order overdetermined partial
dif\/ferential equation, see expression \nn{prim}. A solution of this
determines an Einstein metric only if it is non-vanishing. In fact any
solution is non-vanishing on an open dense set \cite{GoNur} and so we
say that, when equipped with such a solution, a conformal, or
(pseudo-)Riemannian, manifold is almost Einstein \cite{Goalmost}. This
notion is given a geometric interpretation via the conformal tractor
calculus. The conformal tractor connection \cite{Thomas} is a
canonical and conformally invariant vector bundle connection; it is
equivalent \cite{CapGoTAMS} to the conformal Cartan connection of
\cite{Cart}.  From the development of the tractor connection as a
prolonged dif\/ferential system in \cite{BEG} we see that a manifold is
almost Einstein if and only if there is a parallel section of the
standard tractor bundle. (A link between the Cartan connection and
Einstein metrics has been known for some time \cite{Sasaki}).
We establish in Section \ref{PES} that a
Poincar\'e--Einstein structure is simply a special case of such a
structure where the solution $x$ to \nn{prim} is a def\/ining function
for boundary, in fact a special def\/ining function in the sense of
Graham--Lee \cite{GL,GrSrni}. We shall show that along the boundary the
parallel tractor recovers the normal tractor from \cite{BEG} which
controls the relationship between the boundary and interior conformal
geometry; further details will follow in \cite{allein}. From this we
recover the (well known) total umbillicity of the boundary as an
immediate consequence. Finally in section \ref{model} we describe the
f\/lat model of this picture as a hyperbolic hemisphere with the equator
as boundary. In this setting we see, for example, that SO$(n+1,1)$,
which acts transitively on the interior of the homogeneous model,
arises as an isotropy subgroup of the conformal group acting on the
sphere; it is exactly the subgroup f\/ixing the parallel tractor.

In Section \ref{DNM} we discuss the specialisation to
Poincar\'e--Einstein manifolds of the Dirichlet--to--Neumann
machinery. Except in the simplest case (and there following Guillarmou
and Guillop\'e~\cite{GG}), the picture is incomplete and so this
should be viewed as an exploration and development of an emerging
picture. We see in Section \ref{scat} that the Laplacian operator
$(\Delta-s(n-s))$, which controls the scattering construction in
\cite{GrZ}, arises from a conformal operator on the almost Einstein
structure. On the other hand on the Poincar\'e--Einstein interior we
f\/ind that the GJMS conformal powers of the Laplacian of \cite{GJMS}
are compositions of such Laplacians (see expression \nn{scp}). This
suggests a relationship between the scattering construction and
Dirichlet--to--Neumann operators along the lines of \cite{BrGonon}, but
where the interior operator is a GJMS operator~$P_k$. The main link is
developed in Section \ref{adec}, we see in Proposition \ref{PEP} that
on a Poincar\'e--Einstein space, the solution space for these is a
direct sum of Laplacian (generalised) eigenspaces.

In Section \ref{trans} we sketch some new ideas for a
construction of Dirichlet--to--Neumann type operators between tensor
bundles. This is partly inspired by related ideas involving the
``curved translation principle'' for the construction of invariant
dif\/ferential operators, that arose in the pioneering article \cite{ER}
of Eastwood and Rice. This idea has been signif\/icantly developed and
adapted over the years \cite{Esrni,CSS,GSS,Sithesis}.

Finally it should be pointed out that many of the ideas and
contructions developed below generalise, with some weakening, to the
case of conformally compact manifolds which are only asymptotically
Einstein, or with further weakening to asymptotically hyperbolic
structures. Here we have specialised to Poincar\'e--Einstein structures
since in this case the picture seems especially appealing.

\section{Tractor calculus and hypersurfaces} \label{hypersec}

\newcommand{\cq}{\mbox{$\mathcal{Q}$}}

\newcommand{\J}{{\mbox{\sf J}}}

Let $M$ be a smooth manifold. To simplify the discussion we shall
assume throughout that $d\geq 4$ (with minor modif\/ications the
treatment can extended to include $d=3)$.  It will be convenient to
use some standard structures from conformal geometry, further details
and background may be found in \cite{CapGoamb,GoPetCMP}.  Recall that
a (Riemannian) {\em conformal structure\/} on $M$ is a smooth ray
subbundle $\cq\subset S^2T^*M$ whose f\/ibre over $p$ consists of
conformally related positive def\/inite metrics at the point
$p$. Sections of $\cq$ are metrics $g$ on $M$. So we may equivalently
view the conformal structure as the equivalence class $[g]$ of these
conformally related metrics.  The principal bundle $\pi:\cq\to M$ has
structure group $\mathbb{R}_+$, and so each representation $\mathbb{R}_+ \ni
t\mapsto t^{-w/2}\in {\rm End}(\mathbb{R})$ induces a natural line bundle
on $ (M,[g])$ that we term the conformal density bundle $\mathcal{E}[w]$.  In
general each vector bundle and its space of smooth sections will be
denoted in the same way.

We write $\bg$ for the {\em conformal metric}, that is the
tautological section of $S^2T^*M[2]:= S^2T^*M\otimes \mathcal{E}[2]$
determined by the conformal structure. This will be henceforth used to
identify $TM$ with $T^*M[2]$.  For example, with these conventions the
Laplacian $ \Delta$ is given by $\Delta=-\bg^{ab}\nabla_a\nabla_b=
-\nabla^b\nabla_b\,$ where $\nabla$ (or sometimes we will write $\nabla^g$)
is the Levi-Civita connection for some choice of metric $g$ from the
conformal class.  Note $\mathcal{E}[w]$ is trivialised by a choice of metric, and
we write~$\nabla$ (or again sometimes
$\nabla^g$) for the connection corresponding to this trivialisation.
It follows immediately that the (coupled) connection $ \nabla_a$ preserves the
conformal metric.  The conformal metric $\bg$ and its inverse will
henceforth be the default object used used to contract indices on
tensors even when we have f\/ixed a metric from the conformal class.

The Riemann curvature tensor
$R_{ab}{}^{c}{}_d$ is given by
\[(\nabla_a\nabla_b-\nabla_b\nabla_a)V^c=R_{ab}{}^{c}{}_d V^d,
\qquad\text{where} \quad \ V^c\in \mathcal{E}^c.\]
This can be decomposed into the totally trace-free {\em Weyl curvature}
$W_{abcd}$ and the symmetric {\em
Schouten tensor} $P_{ab}$ according to
\[
R_{abcd}=W_{abcd}+2g_{c[a}P_{b]d}+2g_{d[b}P_{a]c}.
\]
Thus $P_{ab}$ is
a trace modif\/ication of the Ricci tensor ${\rm
Ric}_{ab}=R_{ca}{}^c{}_b$. We write $J$ for the conformal metric trace
of $P_{ab}$.

For a given choice of metric $g$, the {\em tractor} bundle $\mathcal{T}$, or
using an obvious abstract index notation $\mathcal{T}^A$, may be identif\/ied
with a direct sum
\[
\mathcal{T}^A \stackrel{g}{=} \mathcal{E}[1]\oplus\mathcal{E}_a[1]\oplus\mathcal{E}[-1] .
\] Thus a section $U$ of $\mathcal{T}$ may be identif\/ied with a triple
$(\si,\mu_a,\rho)$; we will write simply $U^A= (\si,\mu_a,\rho)$.
The conformal transformation of these components is described in, for
example, \cite{BEG} and \cite{powerslap}.  From this, for example, one
sees that the map $\mathcal{T}^A \to \mathcal{E}[1]$ is conformally invariant and may
be regarded as a preferred element $X_A \in \Gamma\mathcal{T}_{A}[1]$ so that,
with $U^A$ again as above, we have $\sigma = U^{A}X_{A}$.  It also
describes the invariant injection $\mathcal{E}[-1] \to \mathcal{T}_{A}$ according to
$\rho \mapsto \rho X_A$. In computations, it is often useful to
introduce the remaining `projectors' from $\mathcal{T}^A$ to the components
$\mathcal{E}_a[1]$ and $\mathcal{E}[-1]$ which are determined by a choice of scale.
They are denoted by $Z_{Aa}\in\mathcal{E}_{Aa}[1]$ and $Y_A\in\mathcal{T}_A[-1]$,
where $\mathcal{E}_{Aa}[w]=\mathcal{T}_A\otimes\mathcal{E}_a\otimes\mathcal{E}[w]$, etcetera.

We describe any tensor product (or symmetric tensor product etcetera)
of the tractor bundle and its dual as tractor bundles.  If such a
bundle is tensored with some bundle of densities~$\mathcal{E}[w]$ then we
shall describe the result as a~{\em weighted tractor bundle}.
In
many cases we wish to indicate a~weighted tractor bundle without being
specif\/ic about the indices of the bundle or any symmetry these may
possess. Thus we write $\mathcal{T}^*[w]$ to mean a weighted tractor bundle
which is the tensor product of $\mathcal{E}[w]$ with any tractor bundle.
Finally, repeated tractor indices indicate a contraction, just as for
tensor indices.

The bundle $\mathcal{T}^A$ carries an invariant signature $(d+1,1)$ {\em
  tractor metric} $h_{AB}$, and a connection~$\nabla_a$ which
  preserves this.  For $ U^A$ as above, this metric is given by
\begin{gather}\label{trmet}
h_{AB}U^AU^B =2\rho \sigma +\mu^a\mu_a.
\end{gather}
 As a point on notation, we may also write
$h(U,U)$ for the expression in the display.  The tractor metric will
be used to raise and lower indices without further mention.  In terms
of the metric $g$ from the conformal class, the connection is given
explicitly by the following formula for $\nabla_a U^B$:
\begin{gather}\label{ndef}
    \nabla_a \left(\begin{array}{c} \sigma \\
  \mu^b \\ \rho \end{array}\right)
  = \left(\begin{array}{c} \nabla_a \sigma - \mu_a \\
  \nabla_a\mu^b + \delta_a{}^b \rho +
  P_{a}{}^b \sigma \\ \nabla_a\rho - P_{ac} \mu^c
  \end{array} \right) .
\end{gather}
Of course this may be extended to a connection on any tractor bundle
in the obvious way. The use of the same symbol $\nabla$ as for the
Levi-Civita connection is intentional. More generally, we shall use
$\nabla$ to mean the coupled Levi-Civita-tractor connection: this enables
us, for example, to apply $\nabla $ to weighted tractor bundles or
tensor-tractor. Although in this case it is not conformally invariant
it enables us to, for example, compute the covariant derivative of the
tractor projectors~$X$,~$Y$ and~$Z$.

As discussed in \cite{BEG},  there is  an invariant second order
operator between weighted tractor bundles due to T.Y. Thomas,
\[
D_A:\mathcal{T}^\ast[w]\to\mathcal{T}^\ast[w-1] ,
\]
by
\begin{gather}\label{Dform}
D_A V:=(d+2w-2)w Y_A V+ (d+2w-2)Z_{}^{a}\nabla_a V + X^A(\Delta-w\J)V,
\end{gather}
where $\J$ is the conformal metric trace of the Schouten tensor,
 i.e.\ $\J=\bg^{ab}P_{ab}$.
For an invariant construction of this see \cite{GoSrni99}.
Notice that, from the conformal invariance of $D$, it follows that the
tractor twisting of the Yamabe operator $\Box:=\Delta- (1-n/2)\J$ is
conformally invariant as an operator $\Box:\mathcal{T}^*[1-n/2]\to
\mathcal{T}^*[-1-n/2] $.  Thus as observed in \cite{GoSrni99}
one obtains conformal Laplacian operators as follows.
\begin{proposition}\label{lap}
The operators
\[
\Box_k:\mathcal{E}^\ast[\sfrac{k-d}{2}]\to \mathcal{E}^\ast[-\sfrac{k+d}{2}],\qquad \mbox{where $k\geq 2$ is even},
\]
defined by
\[
\Box_k:=D^{A} \cdots D^{B} \Box \underbrace{D_{B} \cdots
  D_{A}}_{(k-2)/2}
\]
are conformally invariant differential operators. These take
the form (up to a non-zero constant scale factor)
\[
\Delta^{k/2}+\mbox{ lower order terms},
\]
except when $d$ is even
and $d\leq k$.
\end{proposition}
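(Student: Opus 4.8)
\emph{Conformal invariance.}
The first thing to note is that $\Box_k$ is a composition of conformally invariant operators, so all that is needed is a weight count. The operator $D_A$, hence also $D^A=h^{AB}D_B$, is conformally invariant on any weighted tractor bundle, and $\Box$ is conformally invariant exactly at weight $1-d/2$ (the tractor twisting of the Yamabe operator noted above). Starting on $\mathcal{E}^*[\sfrac{k-d}{2}]$, the inner string of $\sfrac{k-2}{2}$ copies of $D$ lowers the weight by $\sfrac{k-2}{2}$ and creates $\sfrac{k-2}{2}$ lower tractor indices, reaching a weighted tractor bundle of weight $1-d/2$; there $\Box$ applies, giving weight $-1-d/2$; finally the outer string of $\sfrac{k-2}{2}$ copies of $D$, whose tractor indices contract off precisely the ones just created, lowers the weight by a further $\sfrac{k-2}{2}$ to $-\sfrac{k+d}{2}$, leaving the ambient structure indicated by $*$ untouched. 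This yields the asserted domain, range and invariance.

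\emph{Reducing the leading symbol to a flat computation.}
For the rest I would work modulo differential operators of strictly lower order; then all curvature terms drop out and one is doing the flat-model computation. Modulo such terms $D_A\equiv (d+2w-2)\big(wY_A+Z_A{}^a\nabla_a\big)+X_A\Delta$ on weight $w$, the tractor frame obeys $\nabla_aX_A=Z_{Aa}$, $\nabla_aZ_{Ab}\equiv-g_{ab}Y_A$, $\nabla_aY_A\equiv0$, and the tractor--coupled Laplacian $\Delta^{\mathcal T}$ is triangular for the splitting of a tractor into its $X$-, $Z$- and $Y$-slots. Two facts I would record: the exact algebraic identity
\[
D^A(X_A\Phi)=(v+d)(2v+d+2)\,\Phi\qquad(\Phi\ \text{of weight}\ v),
\]
which follows from $Y^AX_A=1$, $X^AX_A=0$, $Z^{Ab}Z_{Ab}=d$ and the frame derivative formulas; and the contractions $X^AD_A=(d+2w-2)w$, $Y^AD_A=\Delta-w\J$, $Z^{Aa}D_A=(d+2w-2)\nabla^a$ on weight $w$.

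\emph{The recursion for the leading coefficient.}
Next I would induct on the even integer $k$ using $\Box_{k+2}=D^A\Box_kD_A$, the middle of the $(k+2)$-fold composition being $\Box_k$ twisted over the new index. Assuming $\Box_k\equiv c_k\Delta^{k/2}$ --- equivalently $\Box_k\equiv c_k(\Delta^{\mathcal T})^{k/2}$ on the twisted bundle, since twisting does not alter the principal part --- take $f$ of weight $w=\sfrac{k+2-d}{2}$, so $d+2w-2=k$ and $D_Af\equiv X_A(\Delta-w\J)f+kZ_A{}^a\nabla_af+kwf\,Y_A$. Pushing this triple through $c_k(\Delta^{\mathcal T})^{k/2}$ and expanding the triangular power slot by slot, one should find the key cancellation: the $Z$- and $Y$-slot contributions to the top-order ($k+2$) part cancel identically, so that $\Box_k(D_Af)\equiv c_k\,X_A\,\Delta^{k/2+1}f$. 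Applying $D^A$ and the algebraic identity with $\Phi=\Delta^{k/2+1}f$, which has weight $v=-\sfrac{k+d+2}{2}$ and hence $(v+d)(2v+d+2)=\sfrac{1}{2}\,k(k+2-d)$, gives $\Box_{k+2}f\equiv\sfrac{1}{2}\,k(k+2-d)\,c_k\,\Delta^{k/2+1}f$. Thus $c_{k+2}=\sfrac{1}{2}\,k(k+2-d)\,c_k$, and since $\Box_2=\Box\equiv\Delta$ forces $c_2=1$, one obtains $c_k=\big(\sfrac{k}{2}-1\big)!\,(4-d)(6-d)\cdots(k-d)$. Finally $c_k=0$ exactly when some factor $2j-d$ with $2\le j\le k/2$ vanishes, i.e.\ when $d\in\{4,6,\dots,k\}$; under the standing hypothesis $d\ge4$ this is precisely ``$d$ even and $d\le k$'', while in every other case $c_k\ne0$, as required.

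\emph{The hard part.}
The only step with real content is the recursion. Because $X^AX_A=0$, the naive product of the principal symbols of the factors of $\Box_k$ vanishes, so the leading symbol of $\Box_k$ is governed by subprincipal data; one genuinely has to carry the full $X/Z/Y$-slot expansion through each $D^A\cdots D_A$ layer, and the delicate point is to verify that the off-$X$ slots cancel at top order, so that the recursion closes with the clean constant $\sfrac{1}{2}\,k(k+2-d)$.
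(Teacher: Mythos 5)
Your argument is correct, and it is worth noting that the paper itself offers no proof of the leading-term claim beyond the remark that it ``follows easily by calculating directly from the definition'' (or by an argument in the cited reference \cite{GoSrni99}); what you have written is exactly that direct calculation, efficiently organised as an induction on $k$. Your two inputs check out: the identity $D^A(X_A\Phi)=(v+d)(2v+d+2)\Phi$ is an exact consequence of $X^AX_A=0$, $Y^AX_A=1$, $Z^{Ab}Z_{Ab}=d$ and the frame derivatives (it is the $d$-dimensional analogue of the paper's identity \nn{DXs}), and with $v=-\sfrac{k+d+2}{2}$ it yields $(v+d)(2v+d+2)=\sfrac12 k(k+2-d)$, whence $c_k=(k/2-1)!\,(4-d)\cdots(k-d)$, which (given the standing hypothesis $d\geq 4$) vanishes precisely when $d$ is even and $d\leq k$. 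The step you flag with ``one should find the key cancellation'' is indeed the crux, and it does hold: modulo curvature the coupled Laplacian acts on slots by $(\sigma,\mu,\rho)\mapsto(\Delta\sigma+2\nabla^a\mu_a+d\rho,\ \Delta\mu-2\nabla\rho,\ \Delta\rho)$, so the $\rho$-slot is autonomous at top order; iterating, the order-$(k+1)$ part of the $\mu$-slot of $(\Delta^{\mathcal T})^{k/2}D_Af$ comes out as $(k-2\cdot\sfrac k2)\nabla\Delta^{k/2}f=0$, and the order-$k$ part of the $\sigma$-slot carries the coefficient $k\big(w-\sfrac k2-1+\sfrac d2\big)=0$ at $w=\sfrac{k+2-d}{2}$; both cancellations are forced by $d+2w-2=k$. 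Two points deserve to be made explicit in a final write-up: (i) what must vanish is the order-$(k{+}1)$ part of the $Z$-slot and the order-$k$ part of the $Y$-slot (not merely their order-$(k{+}2)$ parts, which cannot occur anyway), since the outer $D^A$ subsequently hits these slots with $\nabla^a$ and $\Delta$ respectively; and (ii) the assertion that the twisted $\Box_k$ still has principal part $c_k(\Delta^{\mathcal T})^{k/2}$ rests on strong invariance --- the reduction of the defining composition to $c_k\Delta^{k/2}+\mbox{l.o.t.}$ uses only Leibniz and curvature commutators, hence survives coupling to the tractor connection. With those two remarks spelled out the argument is complete.
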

The facts concerning the leading term follow easily by calculating
directly from the def\/inition, or there is a simple argument
essentially avoiding computation in \cite{GoSrni99}.  Note that, via
\nn{Dform}, this proposition gives an explicit formula for these
operators.

\newcommand{\om}{\omega}
\newcommand{\sbg}{\mbox{\boldmath{\scriptsize$ g$}}}
\subsection{Conformal hypersurfaces}\label{hyp}
Let us f\/irst recall some facts concerning a general hypersurface
$\Sigma$ in a conformal manifold $(M^d,[g])$.  By restriction of the
ambient conformal structure, a conformal structure is induced on
$\Sigma$. We write $(\Sigma^n,[g_\Sigma])$ ($n+1=d$) for this and
shall refer to it as the intrinsic conformal structure of $\Sigma$.
Note that the intrinsic conformal density bundles may be
identif\/ied in an obvious way with the restriction of the ambient
bundles carrying the same weight. We shall write
$\mathcal{E}^\Sigma[w]=\mathcal{E}[w]|_\Sigma$.

Since $d\geq 4$ we have $ n \geq 3$ and so the manifold $ \Sigma$ has
its own intrinsic tractor bundles, connections and so forth.
We shall denote the intrinsic tractor bundle of $\Sigma$
by $\mathcal{T}_\Sigma$. The relationship between this and $\mathcal{T}|_\Sigma$ can be
described in terms of a section of $\mathcal{T}|_{\Sigma}$ that we term the
normal tractor. Let $n_a\in \mathcal{E}_a[1]$ be a conormal f\/ield on $\Sigma$ such
that (along $\Sigma$) we have
$|n|^2_{\sbg}:=\bg^{ab}n_a n_b=1$. Note that this is conformally
invariant since $\bg^{-1}$ has conformal weight $-2$.  Now in the
scale $g$ (from $[g]$) the mean curvature of $\Sigma$ is given by
\[
H^g=\frac{1}{d-1}\big(\nabla_a n^a -n^an^b \nabla_an_b  \big),
\] as a conformal $-1$-density. This is independent of how $n_a$ is
  extended of\/f $\Sigma$.  Now under a~conformal rescaling, $g\mapsto
  \widehat{g}=e^{2\om} g$, $H$ transforms to $
  \widehat{H}=H+n^a\nabla_a\om$. This is exactly the transformation
  required so that
\[
N:\stackrel{g}{=}\left(\begin{array}{c}0\\
n_a\\
-H\end{array}\right),
\]
 is  a conformally invariant section $N$ of
  $\mathcal{T}|_\Sigma$.
Observe that, from \nn{trmet}, $h(N,N)=1$ along $\Sigma$. Obviously $N$ is
independent of any choices in the extension of $n_a$ of\/f
$\Sigma$. This is the {\em normal tractor} of \cite{BEG} and may be viewed as a tractor bundle analogue of the unit co-normal f\/ield from
the theory of Riemannian hypersurfaces.

Recall that a point $p$ in a hypersurface is an umbillic point if the
second fundamental form is trace free (with respect to the f\/irst
fundamental form) at $p$. This is a conformally invariant condition
and the hypersurface is totally umbillic if this holds at all points.
Dif\/ferentiating~$N$ tangentially along $\Sigma$ using the tractor connection,
 we obtain the following result directly from~\nn{ndef}.
\begin{proposition}\label{umbillic}
If the normal tractor $N$ is constant along a hypersurface $\Sigma$
then  the hypersurface $\Sigma$ is totally umbillic.
\end{proposition}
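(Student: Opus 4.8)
The plan is to fix a metric $g$ from the conformal class — there is no loss of generality, since the normal tractor, hence the condition that it be constant along $\Sigma$, as well as total umbillicity, are conformally invariant — and to read the result off the connection formula \nn{ndef}. In the splitting determined by $g$ we have $N\stackrel{g}{=}(0,n_a,-H)$, so applying \nn{ndef} and contracting with a vector field $t^a$ tangent to $\Sigma$ (thus $t^an_a=0$) produces the three slots of $t^a\nabla_aN$ as $t^a\nabla_a\sigma-t^an_a$, then $t^a\nabla_an^b-Ht^b$, and finally $-t^a\nabla_aH-P_{ac}t^an^c$, where $\sigma\equiv0$ on $\Sigma$. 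The top slot vanishes identically: the tangential derivative along $\Sigma$ of the zero density is zero, and $t^an_a=0$ by the choice of $t$. Hence ``$N$ constant along $\Sigma$'' amounts to the vanishing of the remaining two slots.

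The substantive step is to unpack the middle slot. First I would decompose $t^a\nabla_an^b$ into its parts tangent and normal to $\Sigma$. The component along $n$ is $n_b\,t^a\nabla_an^b=\sfrac{1}{2}\,t^a\nabla_a(\bg^{bc}n_bn_c)$, using that $\nabla$ preserves $\bg$; this is the tangential derivative along $\Sigma$ of the function $|n|^2_{\sbg}$, which equals $1$ identically there, so it vanishes and $t^a\nabla_an^b$ is purely tangential. By definition it is then $t^a\mathrm{II}_a{}^b$, where $\mathrm{II}$ is the second fundamental form taken with the sign convention for which the conformal mean curvature $H^g$ displayed above is its metric trace $\sfrac{1}{d-1}\mathrm{II}_a{}^a$. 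Therefore ``$N$ constant along $\Sigma$'' forces $t^a\mathrm{II}_a{}^b=Ht^b$ for every tangential $t$, i.e.\ $\mathrm{II}_{ab}=Hg_{ab}$; since $H$ is exactly $\sfrac{1}{d-1}$ of the trace, this is precisely the statement that the trace-free part of the second fundamental form vanishes, which is total umbillicity of $\Sigma$.

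The bottom slot then yields the extra identity $t^a\nabla_aH=-P_{ac}t^an^c$, which is not needed for the conclusion. I do not anticipate any real obstacle: the proposition is essentially immediate from \nn{ndef}, and the only points requiring a little care are the elementary ones above — decomposing $\nabla_an^b$ along $\Sigma$, noting that its tangential--tangential part is independent of how $n_a$ is extended off $\Sigma$, and matching the tractor mean curvature with the classical one so that ``$\mathrm{II}_{ab}=Hg_{ab}$'' genuinely expresses the trace-free condition rather than a normalisation artifact.
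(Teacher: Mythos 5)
Your proposal is correct and is exactly the paper's argument: the paper proves this proposition simply by differentiating $N\stackrel{g}{=}(0,n_a,-H)$ tangentially with the tractor connection \nn{ndef} and reading off the middle slot, which is precisely what you do (your identification of the tangential part of $t^a\nabla_a n^b$ with $t^a\mathrm{II}_a{}^b$ and of $H$ with $\frac{1}{d-1}\mathrm{II}_a{}^a$ is the right bookkeeping). No gaps.
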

\noindent In fact constancy of $N$ along a hypersurface is equivalent to total
umbillicity. This is (Proposition~2.9) from~\cite{BEG}.

It is
straightforward to verify that the intrinsic tractor bundle to $
\Sigma$ may be identif\/ied with the conformally invariant subbundle
$\mathcal{T}^A_\Sigma$ of $ \mathcal{T}^A|_\Sigma$ which is orthogonal to the normal
tractor~$N^A$~\cite{BrGonon} (an observation which generalises, see \cite{Armstrong}). Thus we have
an invariant splitting
\[
\mathcal{T}^A|_\Sigma=\mathcal{T}^A_\Sigma \oplus {\cal N}^A
\]
given by
\[
v^A\mapsto (v^A-N^A N_Bv^B) +  N^A N_Bv^B.
\]
for $v^A\in\Gamma (\mathcal{T}^A)$. Of course this generalises easily to
tensor products of these bundles, and we shall always view the intrinsic
tractor bundles of $ \Sigma$ in this way; that is, as subbundles of the
restrictions to $\Sigma$ of ambient tractor bundles, the
sections of which are completely orthogonal to~$N^A$.  As a result,
we need
only one type of tractor index.

 We shall use the symbol $P_\Sigma$ to indicate the orthogonal
projection from any ambient weighted tractor bundle, restricted to
$\Sigma$, to the corresponding intrinsic-to-$\Sigma$ weighted tractor
bundle. For example, $P_\Sigma (\mathcal{T}^A|_\Sigma)=\mathcal{T}^A_\Sigma$. In fact
we shall henceforth drop the explicit restriction to $\Sigma$ and
regard this as implicit in the def\/inition of $P_\Sigma$. Thus we shall
write, for example, $P_\Sigma(\mathcal{T}_{AB}[w])=\mathcal{T}^\Sigma_{AB}[w]$; any
section $f_{AB}$ of this bundle has the property that
$f_{AB}N^A=0=f_{AB}N^B$.  The intrinsic-to-$\Sigma$ tractor-D operator
will be denoted $D^\Sigma_A$. We may similarly denote by $X_A^\Sigma$
the tautological tractor belonging to the intrinsic structure of
$\Sigma$. But note that $P_{\Sigma}(X_{A})=X_A|_\Sigma$, and it
follows from the def\/inition of $ X^\Sigma_A$ that in fact
$X^\Sigma_A=X_A|_\Sigma$.  A useful consequence of
these observations (and using the formula \nn{ndef}) is that if $f\in
\mathcal{T}^\ast_\Sigma[w]$, then, on $\Sigma$,
\begin{gather}\label{DXs}
D^A_\Sigma X_A f =(n +2w+2)(n+w)f=(d+2w+1)(d+w-1) f.
\end{gather}

\section[The conformal Dirichlet-Neumann operators of \cite{BrGonon}]{The conformal Dirichlet--Neumann operators of \cite{BrGonon}}
\label{DNBG}

\subsection{Boundary operators}\label{Bops}
 The Dirichlet--to--Neumann maps of \cite{BrGonon} are based around a
pair of boundary problems. The operators of Proposition \ref{lap} are
to be used for the interior operator, compatible with these we need
suitable boundary operators.  The basic prototype is the conformally
invariant Robin operator $ \delta : \mathcal{E}[w]\to \mathcal{E}[w-1] $ given in a
conformal scale $g$ by $\delta f= n^a\nabla^g_a f-wH^g f$ (e.g. \cite{cherrier}). In fact it
is easily verif\/ied that this is {\em strongly invariant}; twisting by
another connection does not destroy conformal invariance. In
particular we may twist this with tractor bundles by using the coupled
Levi-Civita-tractor connection in the formula for $\delta$:
\[
\delta: \mathcal{T}^*[w]\to \mathcal{T}^*[w-1]
\]
is conformally invariant. In fact on $\mathcal{T}^*[w]$, and for $w\neq 1-d/2$, we have
\begin{gather}\label{dD}
\delta = c\cdot N^A D_A
\end{gather}
for a non-zero constant $c$.

Further candidates for conformal boundary operators can be
proliferated using the machinery of the previous section, as
follows.

\medskip

\noindent{\bf Def\/inition.}
For each positive integer $\ell$
there is a conformally invariant dif\/ferential operator
  along $\Sigma$, $\delta_{\ell}$, which
  maps $\mathcal{T}^*[w]$ to
$\mathcal{T}^*_\Sigma [w-\ell]$, given by $\delta_1=\delta$, and
\[
\delta_{\ell} u= \left\{\begin{array}{l}
D_\Sigma^B\cdots D_\Sigma^A P_\Sigma(
\underbrace{D_A\cdots D_B}_{\ell/2}
u)
\mbox{ for }  2\leq \ell \mbox{ even},\\
D_\Sigma^B\cdots D_\Sigma^A P_\Sigma(\delta
\underbrace{D_A\cdots D_B}_{(\ell-1)/2}
u)
\mbox{ for }  3\leq \ell \mbox{ odd}.
\end{array}\right.
\]

For their use in boundary problems one needs information about the
order of the $\delta_\ell$ in directions transverse to $\Sigma$.
Suppose that $p\in \Sigma$ and in a neighbourhood of a point $p$,
$\Sigma$ is given by the vanishing of a def\/ining function $x$. We say
that a dif\/ferential operator $B: \mathcal{F}\to \mathcal{G}$ has {\em
normal order} $r_N$ {\em at} $p\in \Sigma$ if there exists a section
$\phi$ of $\mathcal{F}$ such that $B (x^{r_N} \phi)(p)\neq 0$ but for
any section $ \phi'$ of $\mathcal{F}$, $B (x^{r_N+1} \phi')(p)= 0$.
For our current purposes we only really need the $\delta_\ell$ as
follows.
\begin{proposition}
Let $k$ be a positive even integer. On a hypersurface $ \Sigma$ in a
manifold of dimension $d$
the conformally invariant differential operators
  along $\Sigma$,
\[
\delta_{\ell}:\mathcal{T}^*[\sfrac{k-d}{2}] \to \mathcal{T}^*_\Sigma [\sfrac{k-d-2\ell}{2}],
\]
have properties as follows. If $d$ is odd then
the $\delta_\ell$ have order and normal order $r=r_N=\ell$ for all
$\ell\in \mathbb{Z}_+$. If $d$ is even then the $\delta_\ell$ have
order and normal order $r=r_N=\ell$ if
$
 \ell +1 \leq k \leq d-2$ or $\ell+2\leq k= d.
$
\end{proposition}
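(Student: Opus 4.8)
The plan is to compute the normal order of $\delta_\ell$ by tracking, term by term, how the Thomas-$D$ operator $D_A$ and the Robin operator $\delta$ act on a section of the form $x^{r}\phi$ near a point $p\in\Sigma$, using a defining function $x$ adapted so that $n_a = \nabla_a x$ along $\Sigma$. The key elementary fact is that each application of $D_A$ (via \nn{Dform}) or of $\delta$ raises the normal order by exactly one \emph{generically}, because the $X^A(\Delta - w\J)$ piece of $D_A$ contains $\nabla^b\nabla_b$ and $\delta$ contains $n^a\nabla^g_a$, both of which differentiate once in the transverse direction; the leading-order (in $x$) contribution to $\Delta(x^r\phi)$ at $\Sigma$ is $-r(r-1)x^{r-2}|dx|^2\phi + \cdots$ up to lower-normal-order terms, and similarly $\delta(x^r\phi)$ has leading part $(r - w\cdot(\text{stuff}))x^{r-1}\phi|_\Sigma$. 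So on the face of it $\ell$ applications give normal order $\ell$, with the subtlety being the \emph{vanishing} of the relevant coefficient for special weights.

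Concretely, first I would establish that for a section of weight $w$, $\delta(x^r\phi)|_\Sigma$ has normal order contribution governed by a factor proportional to $(r - w)$ or, more precisely, by the Robin symbol, and that the composite $D_\Sigma^A P_\Sigma(D_A (\cdots))$ and the cap $D_\Sigma$-operators contribute factors of the shape appearing in \nn{DXs}, namely $(d+2w'+1)(d+w'-1)$ for the running weight $w'$. The heart of the matter is then purely arithmetic: with the starting weight $w=\frac{k-d}{2}$, track the running weight down through the $\ell/2$ (or $(\ell-1)/2$) inner $D_A$'s, the projection, and the outer intrinsic $D_\Sigma^A$'s, and check that none of the accumulated scalar factors vanishes. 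The places where a factor can vanish are exactly where $d+2w'+1=0$ or $d+w'-1=0$ for one of the intermediate weights $w'$, i.e. where $d$ is even and small relative to $k$; the inequalities $\ell+1\le k\le d-2$ and $\ell+2\le k=d$ are precisely the conditions ruling these degeneracies out (and when $d$ is odd the half-integer weights can never hit these integer resonances, so no constraint is needed — that explains the clean statement in the odd case).

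For the \emph{order} (as opposed to normal order) one argues that $r\le r_N$ always — a tangential derivative cannot raise the normal order — while the explicit leading-symbol computation already used for Proposition~\ref{lap} (the $\Box_k$ operators are $\Delta^{k/2}+\text{l.o.t.}$, away from the exceptional even-dimensional range) shows the full order is at least $\ell$; combined with $r\le r_N = \ell$ this forces $r = r_N = \ell$. So the order statement is essentially a corollary of the normal order statement plus the leading-term analysis of Proposition~\ref{lap}.

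The main obstacle I expect is bookkeeping: correctly computing the sequence of running weights through the inner Thomas-$D$'s, the projection $P_\Sigma$ (which changes the ambient weight convention to the intrinsic one but not the numerical weight), and the outer $D_\Sigma$'s — and then verifying that the resulting product of linear-in-$w'$ factors is nonzero exactly on the stated range. A secondary subtlety is confirming that applying $P_\Sigma$ and then an intrinsic $D_\Sigma^A$ does not \emph{lower} the normal order (it cannot increase it, being tangential, but one must check it does not annihilate the top-normal-order part); this follows from \nn{DXs}, since $D_\Sigma^A X_A$ acts as the nonzero scalar $(d+2w'+1)(d+w'-1)$ on the relevant piece, which is where the degeneracy analysis feeds back in. Once the weight arithmetic is laid out carefully the rest is routine.
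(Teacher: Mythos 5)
Your approach to the normal order is essentially the one the paper is pointing at: track transverse derivatives contributed by the inner Thomas operators and the Robin operator, then use the identity \nn{DXs} to see that the outer intrinsic $D_\Sigma$'s, applied through the top ($X$-) slot, act by a scalar factor, so that the normal order survives precisely when that scalar is nonzero; the stated range of $(k,d,\ell)$ is the arithmetic bookkeeping of when the factors $(d+2w'+1)(d+w'-1)$ are nonzero as the running weight $w'$ descends (the $d$-odd case working because $k$ even forces $w'$ to be a half-integer, so $d+w'-1$ is never an integer). That is the same route as the paper's one-line proof.

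There are, however, two concrete problems in your write-up.

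First, you assert that ``each application of $D_A$ \dots raises the normal order by exactly one,'' but your own leading-order computation $\Delta(x^{r}\phi)\big|_\Sigma \sim -r(r-1)\,x^{r-2}\,|dx|^2\phi$ shows that the $X_A(\Delta-w\J)$ slot of $D_A$ raises normal order by \emph{two}, not one. This matters for the count: in the even-$\ell$ case there are only $\ell/2$ inner $D$'s, so with an increment of one per $D$ you would land on normal order $\ell/2$, not $\ell$. With the correct increment of two per inner $D$ (and one for the single $\delta$ in the odd-$\ell$ case), the total is $\ell$, consistent with the claim and with your $-r(r-1)x^{r-2}$ formula.

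Second, the order-versus-normal-order paragraph is backwards and does not close the argument. The correct general inequality is $r_N \le r$ (a transverse derivative certainly counts toward total order), not $r \le r_N$; and the appeal to Proposition \ref{lap} (which concerns the interior operator $\Box_k$, not the boundary operator $\delta_\ell$) at best gives a lower bound $r \ge \ell$, which you already have for free from $r \ge r_N = \ell$. What is actually needed, and what your proposal does not address, is the \emph{upper} bound $r \le \ell$: a priori the composition $D_\Sigma^A P_\Sigma D_A$ could have order $4$, since both $D_A$ and $D_\Sigma^A$ are second order. That it does not is a genuine cancellation, controlled by exactly the algebraic contractions you invoke for the normal order ($D_\Sigma^A X_A$ is zeroth order by \nn{DXs}, $X^A Y_A = 1$, $X^A Z_A{}^a = 0$, etc.), but those contractions have to be tracked for the lower slots too, not just for the top-normal-order piece. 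As written, your proposal proves $r_N = \ell$ (modulo the first correction) but leaves $r = \ell$ unproved.
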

This follows easily from the identity \nn{DXs} and the def\/inition of
of the tractor-D operator.

It turns out that appropriate combinations of the operators
$\delta_\ell$ lead to good elliptic problems (and in particular
problems which satisfy the so-called Lopatinski--Shapiro conditions
which signal well-posedness for boundary problems) with the operator
$\Box_k$ (from Proposition \ref{lap}), which itself is properly
elliptic. See Proposition~7.1 in \cite{BrGonon}.  A key but technical
point of that work with Branson is that there are modif\/ications of the
operators $\delta_\ell$ to similar conformal boundary operators $\delta'_{i}$
(each of the same respective normal order as $\delta_j$) so that we
may maintain the ellipticity properties but in addition  achieve
formally self-adjoint boundary problems. Let us simply summarise; the
reader is referred to \cite{BrGonon,grubb,kumano} for details and background.
\begin{proposition}\label{selfad} For $d$ even let
  $k\in\{0,2,\dots,d-2 \} $ and for $ d$ odd let $k\in 2\mathbb{Z}_+$.
  For each such $ k$
and each of ${\bf m}={\bf m}_{\rm D}=(0,2,\ldots,k-2)$,
${\bf m}={\bf m}_{\rm N}=(1,3,\ldots,k-1)$, and
${\bf m}={\bf m}_0:=(0,1,\ldots,k/2-1)$ there exist conformally
invariant normal boundary operators $\delta'_{\bf m}$
such that $(\Box_k,\delta'_{\bf m})$
is formally self-adjoint and satisfies the Lopatinski--Shapiro conditions.
\end{proposition}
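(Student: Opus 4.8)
The plan is to verify both requirements --- the Lopatinski--Shapiro covering condition and formal self-adjointness --- at the level of the constant-coefficient model problem on the half-space $\{x\ge 0\}$ at a point of $\Sigma$, since both are local conditions along the boundary. By Proposition~\ref{lap}, and since the hypothesis $k\le d-2$ (for $d$ even) keeps us outside the exceptional range $d$ even, $d\le k$, the operator $\Box_k$ is a nonzero multiple of $\Delta^{k/2}$ plus lower order terms; it is thus properly elliptic of order $k$, and at a boundary point with nonzero tangential cofrequency $\xi'$ its model operator is, up to scale, $(|\xi'|^2+D_x^2)^{k/2}$, whose space of solutions on $\{x>0\}$ decaying at infinity is the $k/2$-dimensional span of $x^{j}e^{-|\xi'|x}$, $0\le j\le k/2-1$. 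The same hypotheses, via the Proposition preceding this one, guarantee that each $\delta_\ell$ occurring has order and normal order exactly $\ell$; and from \nn{Dform}, \nn{dD}, \nn{DXs} and the Definition of $\delta_\ell$ one reads off that the coefficient of $D_x^{\ell}$ in its principal symbol is a nonzero constant (here $\delta_0$ denotes restriction to $\Sigma$). Since the normal orders occurring in $\bmd$, $\bmn$, $\bmo$ are distinct, the three boundary symbol systems reduce, to leading normal order, to the systems $\{1,D_x^{2},\dots,D_x^{k-2}\}$, $\{D_x,D_x^{3},\dots,D_x^{k-1}\}$ and $\{1,D_x,\dots,D_x^{k/2-1}\}$ for $\Delta^{k/2}$ --- Navier-, Neumann- and Dirichlet-type respectively --- each of which restricts to an isomorphism of the decaying solution space onto the space of boundary data. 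This establishes the covering condition for the unmodified triples $(\Box_k,\delta_{\bmd})$, $(\Box_k,\delta_{\bmn})$, $(\Box_k,\delta_{\bmo})$.

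Next one needs a Green identity for $\Box_k$. The Yamabe operator $\Box$ is formally self-adjoint between the conjugate weights on which it is invariant, and in $\Box_k=D^A\cdots D^B\,\Box\,D_B\cdots D_A$ the two strings of Thomas operators are paired so that, after integration by parts, $\Box_k$ is itself formally self-adjoint (on densities this is just the self-adjointness of the GJMS operators). Hence for sections $u,v$ of $\mathcal{T}^*[\sfrac{k-d}{2}]$,
\[
\int_M\big((\Box_k u)\,v-u\,(\Box_k v)\big)=\int_\Sigma\mathcal{B}(u,v),
\]
where $\mathcal{B}$ is bilinear in the normal $(k-1)$-jets of $u$ and $v$ along $\Sigma$. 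The left-hand side is antisymmetric under $u\leftrightarrow v$, so $\mathcal{B}$ descends to a nondegenerate skew form $\omega$ on the space $J$ of boundary jet-data at a point, of dimension $k$ times the rank of the bundle, nondegeneracy being equivalent to the ellipticity of $\Box_k$. A conformal normal boundary system with $k/2$ (bundle-valued) components then yields a formally self-adjoint problem with $\Box_k$ precisely when the subspace of $J$ it cuts out is $\omega$-Lagrangian; and as that subspace automatically has dimension $\sfrac{1}{2}\dim J$, it is enough to arrange that it be $\omega$-isotropic.

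The remaining --- and genuinely delicate --- step is the construction of the modified operators $\delta'_\ell$. One computes the leading part of $\mathcal{B}$ from \nn{ndef} and finds it of the familiar shape $\sum_{j}\pm(\partial_\nu^{j}u)(\partial_\nu^{k-1-j}v)$ together with pairings of strictly smaller normal bidegree; for each of $\bmo$, $\bmd$, $\bmn$ the associated jet-space is isotropic for this leading part, and the residual lower-order asymmetry is to be cancelled by setting $\delta'_\ell=\delta_\ell+(\mbox{conformally invariant boundary operators of strictly lower normal order})$, the corrections being built from the lower $\delta_j$ and the intrinsic tractor operators $D^\Sigma_A$, $X^\Sigma_A$ of $\Sigma$ and chosen inductively in $\ell$ so that $\omega$ vanishes on the span of the kernels produced so far. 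Because each correction has strictly lower normal order it leaves untouched the leading normal symbols used in the first paragraph, so the Lopatinski--Shapiro condition is preserved under the modification. I expect the main obstacle to be exactly this simultaneous balancing: one must verify that conformally invariant boundary operators of each prescribed lower normal order really are available (so that one never has to leave the conformal class), that the inductive choices are consistent for all three families $\bmd$, $\bmn$, $\bmo$ at once, and that the covering condition is nowhere destroyed en route. The complete verification, together with the functional-analytic framework for such boundary systems, is the content of \cite{BrGonon} (see also \cite{grubb,kumano}).
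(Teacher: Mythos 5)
Your outline is sound and consistent with what the paper does: the paper itself offers no proof of this Proposition, only a summary followed by a citation of \cite{BrGonon} (Proposition~7.1 and the surrounding material), which is precisely where your ``genuinely delicate'' final step --- the inductive construction of the lower normal order conformally invariant corrections turning $\delta_\ell$ into $\delta'_\ell$ --- is actually carried out. Your reduction of the Lopatinski--Shapiro condition to the half-space model for $\Delta^{k/2}$ and of formal self-adjointness to an isotropy (Lagrangian) condition on the boundary bilinear form matches the strategy of that reference, and you correctly and honestly flag that the existence of the corrections is established there rather than here.
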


\subsection[The Dirichlet-to-Neumann maps]{The Dirichlet--to--Neumann maps}

Let $M$ be an $d$-dimensional conformal
manifold of positive def\/inite metric signature,
with smooth boundary $\Sigma$.
Suppose that $k$ is even and, if $d$ is even, suppose that $k<d$.
Let ${\bf m}={\bf m}_{\rm D\,or\,N }\,$, and suppose that the
problem $(\Box_k\,,\delta'_{\bf m})$ has vanishing null space.
Take a density $u$ on $\Sigma$, and boundary data
\begin{gather}\label{SetBdryCond}
\delta'_{m_j}u=U_{\rm o},\qquad\delta'_{m_i}u=0\qquad \mbox{for all}\quad i\ne j,
\end{gather}
on $\Sigma$, where $j$ is a chosen element of $\{1,\ldots,k/2\}$.  Let
$E_{k,m_j}$ be the solution operator for the system $\Box_ku=0$ with
\nn{SetBdryCond}; by elliptic regularity the range of $E_{k,m_j}$ is
smooth and by construction it is an invariant operator carrying
$\mathcal{E}_\Sigma[\sfrac{k-d-2m_j}2]$ to $\mathcal{E}[\sfrac{k-d}2]$.  We can now
take $E_{k,m_j}u$ and apply $\delta'_\ell$ (or $\delta_\ell$).
($\ell$ need not be one of the normal orders in ${\bf m}$.)
Composing,
\begin{gather}\label{comp}
\mathcal{E}_\Sigma[\sfrac{k-d-2m_j}2]\stackrel{E_{k,m_j}}
{\tlongrightarrow}
\mathcal{E}[\sfrac{k-d}2]
\stackrel{\delta'_{\ell}}{\tlongrightarrow}
\mathcal{E}_\Sigma[\sfrac{k-d-2\ell}2],
\end{gather}
we obtain invariant operators
\[
P_{k,{\bf m},m_j\,,\ell}:\mathcal{E}_\Sigma[\sfrac{k-d-2m_j}2]\to
\mathcal{E}_\Sigma[\sfrac{k-d-2\ell}2].
\]
Note that for this construction to make sense, as given, we need the
{\em source problem} $E_{k,m_j}: \mathcal{E}_\Sigma[\sfrac{k-d-2m_j}2]\to
\mathcal{E}[\sfrac{k-d}2]$ to be uniquely solvable. The second part of the
construction $\mathcal{E}[\sfrac{k-d}2] \to \mathcal{E}_\Sigma[\sfrac{k-d-2\ell}2]$ is
related to a complementary {\em target problem}, but its solvability
properties are not required.

From elementary representation theory and the invariance of the
construction, it is straightforward to show that when $m_j+\ell\ne
k-1$ the operators $P_{k,{\bf m},m_j\,,\ell}$ vanish for the standard
conformal class on the unit ball. Equivalently they vanish on the unit
hemisphere, which is in the conformal class of the unit ball, and this
is a convenient homogeneous setting for exploiting the spherical
harmonics in order to study this and related issues \cite[Theorem
8.4]{BrGonon}. In particular there one also sees that $\pkm$ has
principal part $(-\Delta_\Sigma)^{(k-1-2m_j)/2}$, up to multiplication
by a non-zero universal constant. In addition, by construction, the
operators $\pkm $ are formally self-adjoint on any conformal
manifold \cite[Theorem 8.5]{BrGonon}.

Writing $P_{k,{\bf m},m_j}:= \pkm$, in summary we have the following:
\begin{theorem}\label{psidops} Let $M^{d=n+1}$ be a conformal
manifold of positive definite metric signature,
with smooth boundary $\Sigma$.
Suppose that $k$ is even and, in case $d$ is even, suppose that $k<d$.
Let ${\bf m}={\bf m}_{\rm D\,or\,N,or\,0}\,$, and suppose that the
problem $(\Box_k\,,\delta'_{\bf m})$ has vanishing null space.
Then there exist canonical
conformally invariant
operators
\[
P_{k,{\bf m},m_j}: \ \mathcal{E}_\Sigma[\sfrac{k-n-2m_j-1}2]\to
\mathcal{E}_\Sigma[\sfrac{-k-n-2m_j+1}2], \quad \quad m_j\in {\bf m},
\]
with principal part $\Delta^{(k-1-2m_j)/2}$.
\end{theorem}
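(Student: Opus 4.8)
The plan is to realise $P_{k,{\bf m},m_j}$ as the composite \nn{comp} in the distinguished case $\ell=k-1-m_j$, i.e.\ as $\pkm$, and to extract its mapping properties, invariance and principal part from ingredients already in place. First I would establish well-posedness of the source problem. By Proposition~\ref{selfad} the pair $(\Box_k,\delta'_{\bf m})$ satisfies the Lopatinski--Shapiro conditions and is formally self-adjoint; since $\Box_k$ is properly elliptic (Proposition~\ref{lap}), $(\Box_k,\delta'_{\bf m})$ is therefore a Fredholm boundary problem of index zero. The standing hypothesis that its null space vanishes then makes it an isomorphism, so for data $\delta'_{m_j}u=U_{\rm o}$, $\delta'_{m_i}u=0$ ($i\neq j$) there is a unique $u$ with $\Box_k u=0$, and by elliptic regularity $u$ is smooth up to $\Sigma$. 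This is the solution operator $E_{k,m_j}:\mathcal{E}_\Sigma[\sfrac{k-d-2m_j}2]\to\mathcal{E}[\sfrac{k-d}2]$.

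Next I would set $\pkm:=\delta'_{k-1-m_j}\circ E_{k,m_j}$ (equally one may use $\delta_{k-1-m_j}$). Following the weights along \nn{comp}, the target of $\delta'_{k-1-m_j}$ is $\mathcal{E}_\Sigma[\sfrac{k-d-2(k-1-m_j)}2]$, which on substituting $d=n+1$ gives the density bundle recorded in the statement. Conformal invariance of the composite is inherited: $\Box_k$ is conformally invariant by Proposition~\ref{lap}, each $\delta'_\ell$ (like each $\delta_\ell$) is conformally invariant by construction, and under $g\mapsto e^{2\om}g$ the entire boundary problem transforms naturally, so $E_{k,m_j}$, and hence $\pkm$, is unchanged; canonicity (independence of any auxiliary choices) is built into the same data. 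This already yields the existence of the asserted operators.

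The remaining, substantive claim is the identification of the principal part with $\Delta^{(k-1-2m_j)/2}$. Since the principal symbol at a boundary point is local and unchanged by conformal rescaling, I would pass to the flat half-space model $\mathbb{R}^n\times\mathbb{R}_{\geq 0}$ with $x$ the transverse coordinate, where $\Box_k=\Delta^{k/2}$ and, to top order, $\delta'_\ell$ is the normal derivative $\partial_x^\ell$. Fourier transforming in the tangential directions, the bounded solutions of $\Delta^{k/2}\widehat u=0$ form the $k/2$-parameter family $\widehat u(\xi,x)=\big(\sum_{i=0}^{k/2-1}a_i(\xi)\,x^i\big)e^{-|\xi|x}$; the conditions $\delta'_{m_i}u=0$ ($i\neq j$) and $\delta'_{m_j}u=\widehat U_{\rm o}$ pin down the $a_i(\xi)$ --- that they do so uniquely is exactly the Lopatinski--Shapiro condition for $(\Box_k,\delta'_{\bf m})$ --- and feeding the resulting solution into $\delta'_{k-1-m_j}$ produces a nonzero universal constant times $|\xi|^{k-1-2m_j}\widehat U_{\rm o}$, the symbol of $\Delta^{(k-1-2m_j)/2}$. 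This is the calculation performed on the conformally equivalent sphere via spherical harmonics in \cite[Theorem~8.4]{BrGonon}, where it is also seen that for $m_j+\ell\neq k-1$ the model operator vanishes; formal self-adjointness of $\pkm$ on an arbitrary conformal manifold is \cite[Theorem~8.5]{BrGonon}. Normalising away the constant gives principal part exactly $\Delta^{(k-1-2m_j)/2}$.

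The hard part is this last step: showing that $\ell=k-1-m_j$ is precisely the threshold at which the composite first fails to vanish on the model, and that the boundary operators making up $\delta'_{\bf m}$ are independent enough --- the Lopatinski--Shapiro condition --- for the transverse ODE at frequency $\xi$ to be uniquely solvable. Everything else reduces to bookkeeping with the conformal invariance and ellipticity of $\Box_k$ and the $\delta'_\ell$ secured above.
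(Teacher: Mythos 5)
Your proposal is correct and follows essentially the same route as the paper: the operator is the composite $\delta'_{k-1-m_j}\circ E_{k,m_j}$ from \nn{comp}, with invariance and weights read off from the ingredients, and the principal part and the vanishing for $m_j+\ell\neq k-1$ deferred (as the paper does) to \cite[Theorems~8.4 and~8.5]{BrGonon}. Your half-space Fourier sketch is just a local restatement of the spherical-harmonics computation cited there, so nothing essentially new or missing.
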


\section[Poincar\'e-Einstein Manifolds and generalisations]{Poincar\'e--Einstein Manifolds and generalisations}\label{PEgen}

\newcommand{\II}{I \hspace*{-3pt} I}

We give here a conformal development of Poincar\'e--Einstein
manifolds.

\subsection{Almost Einstein manifolds}\label{alE}

The Schouten tensor $P$ (or $P^{g}$), introduced earlier, is related to the
 Ricci tensor by
\[
\Ric=(d-2)P+J g,
\] where $J$ is the metric trace of $P$. The metric $g$ is conformally
Einstein if and only if there is a~non-vanishing solution $x\in
C^\infty(M)$ to the equation
\begin{gather}\label{prim}
\text{trace}-\text{free}(\nabla\nabla x + P x)=0;
\end{gather}
if $x$ is such a solution then it follows easily from the conformal
transformation of $P$ that the metric $\widehat{g}=x^{-2}g$ is
Einstein \cite{BEG}.

Note that the requirement that $x$ be non-vanishing is critical if we
want a solution $x$ to be a genuine conformal factor:
$\widehat{g}=x^{-2}g$ will blow up conformally at points where $x$
vanishes. Nevertheless, let us relax this and allow any
solution. Following \cite{Goalmost} we will say that $(M,g)$ is {\em
almost Einstein} if there is a solution $x\in C^\infty(M)$ to the
equation \nn{prim}.

It turns out that the almost Einstein condition is a useful
weakening of the Einstein equations. First observe that the equation
\nn{prim} is conformally well behaved. If we replace $x$ with a
conformal density of weight 1, $\si\in \mathcal{E}[1]$, then it is easily verif\/ied that
\nn{prim} is actually conformally invariant; it descends to a well
def\/ined equation on the conformal structure $(M,[g])$. In a scale we
may write the equivalent equation on $\si$ in the form
\begin{gather}\label{primc}
\nabla_a\nabla_b \si + P_{ab}\si +\bg_{ab}\rho =0,
\end{gather}
 where the
section $\rho\in \mathcal{E}[-1]$ captures the trace part. The next crucial
observation is that by inspection of the formula \nn{ndef} we see that
the equation for a parallel section of standard tractor bundle $\mathcal{T}$
is just the prolonged system for this equation. Informally stated the
f\/irst equation from \nn{ndef} equates the variable $\mu_a$ to the
derivative of $\si$. Then the middle equation from \nn{ndef} just is
$\nabla_a\nabla_b \si + P_{ab}\si +\bg_{ab}\rho =0$, while
$\nabla_a \rho=P_{ac}\mu^c$ is a dif\/ferential consequence of this.
From that system it follows that if
$I\stackrel{g}{=}(\si,\mu_a,\rho)$ is a parallel section for
$\nabla^{\mathcal{T}} $ then necessarily
\begin{gather}\label{pD}
 \big(\si, \mu_a, \rho\big) =
\left(\si,\nabla_a \si , \frac{1}{d}(\Delta \si -\J \si)\right).
\end{gather}
That is
$I=\frac{1}{n}D_A\si$, where $D$ is Thomas tractor-D operator introduced
in \nn{Dform}.
Now by construction $D$ is dif\/ferential and, on the other
hand, parallel transport between two points along any curve gives an
isomorphism of the vector bundle f\/ibres over those points. It follows
immediately that if $I$ is parallel and $\si$ vanishes on any
neighbourhood then $I$ vanishes everywhere. Equivalently, if $I \neq
0$ is parallel then $\si:=h(X,I)$ is non-vanishing on an open dense
set; this is the key.
Summarising,  we have the following.
\begin{theorem}\label{key}
An almost Einstein structure is a conformal manifold $(M,[g])$
 equipped with a~paral\-lel (standard) tractor $I\neq 0$. The mapping
 from non-trivial solutions of \eqref{primc} to parallel tractors is by
 $\si\mapsto \frac{1}{n}D\si$ with inverse $I\mapsto \si:=h(I,X)$, and $\si$ is
 non-vanishing on an open dense set $M\setminus \Sigma$. On this set
 $\g:=\si^{-2}\bg$ is Einstein.
\end{theorem}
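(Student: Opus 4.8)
The plan is to assemble the theorem from three separate verifications: (i) the equivalence between non-trivial solutions of \eqref{primc} and non-zero parallel standard tractors, given explicitly by $\sigma\mapsto\frac1n D\sigma$ and $I\mapsto\sigma:=h(I,X)$; (ii) the density-openness statement that such a $\sigma$ vanishes only on a nowhere-dense set; and (iii) the fact that on the complement $\sigma^{-2}\bg$ is Einstein. Each of these is essentially already laid out in the discussion preceding the statement, so the proof is mostly a matter of organizing and making precise what was said informally there.

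First I would establish (i). The forward direction is a direct inspection of the tractor connection formula \eqref{ndef}: if $I\stackrel{g}{=}(\sigma,\mu_a,\rho)$ is parallel, the top slot forces $\mu_a=\nabla_a\sigma$, the middle slot then becomes exactly \eqref{primc}, namely $\nabla_a\nabla_b\sigma+P_{ab}\sigma+\bg_{ab}\rho=0$, and tracing this with $\bg^{ab}$ gives $\rho=\frac1d(\Delta\sigma-\J\sigma)$ (using $\Delta=-\bg^{ab}\nabla_a\nabla_b$ and the sign conventions of the paper); the bottom slot $\nabla_a\rho=P_{ac}\mu^c$ is then automatically a differential consequence and imposes nothing new. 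Comparing with \eqref{Dform} at weight $w=1$, where $d+2w-2=d$, one reads off that the triple $(\sigma,\nabla_a\sigma,\rho)$ is precisely $\frac1d D_A\sigma$; since the paper writes $n$ in place of $d$ at weight $1$ (note $d+2w-2=d$, but the normalisation constant quoted in \eqref{pD} is $\frac1n$ — I would simply follow the paper's normalisation $I=\frac1n D\sigma$ consistently), the map $\sigma\mapsto\frac1n D\sigma$ lands in parallel tractors. Conversely, given a non-zero parallel $I$, set $\sigma:=h(I,X)=\sigma$ (the top slot); since $I$ is parallel, $\nabla_a I=0$ and the displayed computation shows $\sigma$ solves \eqref{primc}. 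That $D$ is a differential operator makes the two maps mutually inverse on the nose.

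Next, (ii): I would argue that a non-zero parallel $I$ forces $\sigma=h(I,X)$ to be non-vanishing on an open dense set. The key point, already flagged in the text, is that parallel transport along curves is a fibrewise isomorphism of the tractor bundle, so a parallel section that vanishes on any open set vanishes identically; hence $I\neq0$ implies $I$ is nowhere zero. Now $\sigma=h(I,X)$, and at a point where $\sigma$ and all its derivatives vanish to infinite order one can use $I=\frac1n D\sigma$ together with the explicit form \eqref{pD} to see $I$ vanishes there; more robustly, the zero set $\Sigma=\{\sigma=0\}$ cannot contain an open set (else $I=\frac1n D\sigma$ would vanish on that open set, contradicting $I\neq0$), so $M\setminus\Sigma$ is open and dense. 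This is the cleanest route and it is exactly the "key" the author emphasizes; citing \cite{GoNur} for the sharper statement is optional here.

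Finally, (iii): on $M\setminus\Sigma$ the density $\sigma$ is a genuine positive (after possibly multiplying $I$ by $-1$, or working locally on connected components — I would remark that in positive-definite signature one can fix the sign) section of $\mathcal{E}[1]$, hence a conformal scale, and $\g:=\sigma^{-2}\bg$ is an honest metric there. That it is Einstein is the content of \cite{BEG}: equation \eqref{primc} is the trace-free part of $\nabla\nabla\sigma+P\sigma=0$ written conformally invariantly, and the conformal transformation law of the Schouten tensor shows that in the scale $\sigma$ the trace-free Schouten tensor vanishes, i.e. the metric is Einstein; this is precisely the assertion in \eqref{prim} with $x=\sigma$ on the locus where $\sigma\neq0$. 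I would simply invoke this computation rather than redo it. The main obstacle — and it is minor — is bookkeeping: keeping the normalisation constant ($\frac1n$ versus $\frac1d$), the Laplacian sign convention, and the weight-$1$ specialisation of \eqref{Dform} all consistent with the paper's conventions, and handling the sign of $\sigma$ so that $\sigma^{-2}\bg$ is genuinely a Riemannian metric on the (possibly disconnected) set $M\setminus\Sigma$.
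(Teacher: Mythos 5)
Your proposal is correct and follows essentially the same route as the paper: reading the prolonged system off \eqref{ndef} to identify parallel tractors with solutions of \eqref{primc} via \eqref{pD}, using that $D$ is differential together with the fibrewise isomorphism property of parallel transport to get density of $M\setminus\Sigma$, and citing the conformal transformation of the Schouten tensor (from \cite{BEG}) for the Einstein condition off the zero set. The normalisation issue you flag ($\tfrac1n$ versus $\tfrac1d$) is indeed an inconsistency in the paper's own statement, and your handling of it is appropriate.
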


From the theorem we see that an almost Einstein manifold just {\em is}
a conformal manifold with a parallel standard tractor $I$ and we write
$(M,[g],I)$ to indicate this.  The set $\Sigma$, where the almost
Einstein ``scale'' $\si=h(X,I)$ vanishes, is called the scale
singularity set. Although it is not essential for our current
discussion, we note that on Riemannian signature manifolds the
possibilities for this are severely restricted as follows
\cite{allein}. We write $|I|^2$ as a shorthand for~$h(I,I)$.
\begin{theorem}\label{classthm} Let $(M,[g],\si)$ be an almost Einstein
structure and write $I:=\frac{1}{n}D\si$.  If $|I|^2<0$ then $\Sigma$
 is empty and $(M,\si^{-2}\bg)$ is Einstein with positive scalar
 curvature; If $|I|^2=0$ then $\Sigma$ is either empty or consists of
 isolated points, and $(M\setminus \Sigma,\si^{-2}\bg)$ is Ricci-flat;
 if $|I|^2> 0$ then the scale singularity set $\Sigma$ is either
 empty or else is a totally umbillic hypersurface, and $(M\setminus
 \Sigma, \si^{-2} \bg)$ is Einstein of negative scalar curvature.
\end{theorem}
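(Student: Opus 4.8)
The plan is to reduce everything to two elementary facts about the parallel tractor $I$. First, $|I|^2=h(I,I)$ is a constant, since $\nabla$ preserves $h$ and $I$ is parallel, so on a connected $M$ the three cases are genuinely exhaustive. Second, writing $I\stackrel{g}{=}(\si,\mu_a,\rho)$ in any scale, \nn{trmet} together with \nn{pD} gives the pointwise identity $|I|^2 = 2\rho\si+\mu^a\mu_a = 2\rho\si+|\nabla\si|^2$; restricting this to the scale singularity set $\Sigma=\{\si=0\}$ and using that the signature is Riemannian yields $|I|^2=|\nabla\si|^2|_\Sigma\ge 0$ along $\Sigma$. This already settles the case $|I|^2<0$: then $\Sigma$ must be empty. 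That $(M,\si^{-2}\bg)$ is Einstein is Theorem~\ref{key}, and I would pin down the sign of its scalar curvature once and for all by passing to the scale $\g=\si^{-2}\bg$ itself, in which $\si$ is represented by the constant function $1$; then $\mu_a=0$ and, by \nn{pD}, $\rho=-\tfrac1d\J^{\g}$, so \nn{trmet} gives $|I|^2=-\tfrac2d\J^{\g}$, i.e.\ $|I|^2$ is a negative constant multiple of the (constant) scalar curvature of $\g$. Hence $|I|^2<0$, $=0$, $>0$ correspond exactly to positive, zero (Ricci-flat), and negative scalar curvature.

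For $|I|^2=0$: along $\Sigma$ the identity forces $\nabla\si=0$, so $\Sigma$ consists of critical points of $\si$. At such a point $p$ we have $\si(p)=0$ and $\mu_a(p)=0$, whence $I|_p\stackrel{g}{=}(0,0,\rho(p))=\rho(p)X|_p$; since $I\neq 0$ this forces $\rho(p)\neq 0$. Feeding $\si(p)=0$ into \nn{primc} gives $\nabla_a\nabla_b\si|_p=-\rho(p)\,g_{ab}$, a definite Hessian, so $p$ is a nondegenerate critical point at which (Morse lemma) $\si$ does not vanish on a punctured neighbourhood; thus $\Sigma$ is discrete, i.e.\ a set of isolated points. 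The Einstein and Ricci-flat assertions then follow as in the previous paragraph.

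For $|I|^2>0$: along $\Sigma$ the identity gives $|\nabla\si|^2=|I|^2>0$, so $d\si$ is nowhere zero on $\Sigma$ and $\Sigma$, if nonempty, is a smooth embedded hypersurface with unit conormal $n_a:=\mu_a/\sqrt{|I|^2}$. The remaining task is to identify $\tfrac{1}{\sqrt{|I|^2}}I$ along $\Sigma$ with the normal tractor $N$ of Section~\ref{hyp}; granting this, $N$ is parallel along $\Sigma$ because $I$ is, and Proposition~\ref{umbillic} gives total umbillicity, while Theorem~\ref{key} together with the scalar-curvature computation gives that $(M\setminus\Sigma,\si^{-2}\bg)$ is Einstein of negative scalar curvature. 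To make the identification one computes the bottom slot: with $n_a=\mu_a/\sqrt{|I|^2}$ and \nn{primc} (using $\si=0$ on $\Sigma$, so $\nabla_a\mu_b=-\rho\,g_{ab}$ there), the mean curvature works out to $H^g=-\rho/\sqrt{|I|^2}$, which is exactly the third component of $\tfrac{1}{\sqrt{|I|^2}}I$; hence $N\stackrel{g}{=}(0,n_a,-H^g)=\tfrac{1}{\sqrt{|I|^2}}I|_\Sigma$.

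The one genuinely substantive point is the case $|I|^2=0$: one must see that $\rho$ cannot vanish at a point of $\Sigma$, and the right observation is the purely tractorial one that $I|_p$ collapses to $\rho(p)X|_p$. Everything else — the sign dichotomy, the hypersurface/umbillicity statement, the scalar-curvature normalisation — is a short unwinding of \nn{trmet}, \nn{primc} and the already-established Theorem~\ref{key} and Proposition~\ref{umbillic}. The Riemannian signature hypothesis enters exactly once, but crucially: it is what makes $|\nabla\si|^2\ge 0$ and thereby ties the sign of the constant $|I|^2$ to the geometry of $\Sigma$.
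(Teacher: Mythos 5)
Your proposal is correct. Note that the paper itself does not prove Theorem~\ref{classthm}: it is stated as a result from the forthcoming reference on almost Einstein manifolds in Riemannian signature, so there is no in-text proof to compare against. That said, every ingredient you use is exactly the one the paper deploys in the special case $|I|^2=1$: the identity $|I|^2=2\rho\si+\mu^a\mu_a$ from \nn{trmet} restricted to $\Sigma$, the computation $|I|^2=-\frac{2}{d}\J^{\g}$ in the Einstein scale (proof of Proposition~\ref{peae}), and the identification of $I|_\Sigma$ (suitably normalised) with the normal tractor $N$ followed by Proposition~\ref{umbillic} (Proposition~\ref{IvsN} and Corollary~\ref{umC}). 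Your extension to the three sign cases is sound: constancy of $|I|^2$ makes the trichotomy exhaustive, Riemannian signature forces $|I|^2\geq 0$ on $\Sigma$ and so rules out a nonempty $\Sigma$ when $|I|^2<0$, and in the null case the observation that $I|_p=\rho(p)X|_p$ with $I\neq 0$ forces $\rho(p)\neq 0$, giving via \nn{primc} a definite Hessian and hence isolated zeros --- this is indeed the one step with genuine content, and you have it right. Two minor bookkeeping points: the normalisation should be $I=\frac{1}{d}D\si$ (as in \nn{pD}; the $\frac1n$ in the statement is a slip carried over from Theorem~\ref{key}), and in invoking the Morse lemma one should trivialise the weight-$1$ density $\si$ by a background scale, which does not affect its zero set.
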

For the special case of Poincar\'e--Einstein manifolds, we shall see
the result concerning total umbillicity in Corollary \ref{umC} below.
Almost Einstein metrics turn up in the classif\/ications by Derdzinski
and Maschler of K\"ahler metrics which are almost everywhere conformal to
Einstein, see e.g. \cite{DM} and references therein.

\subsection[Poincar\'e-Einstein spaces]{Poincar\'e--Einstein spaces} \label{PES}

Recall that a Poincar\'e--Einstein structure is a compact manifold
$(M^{d=n+1},g)$ with boundary $\Sigma=\partial M$. There is a def\/ining
function $x$ for $\Sigma$ so that $(M_+,\g)$ is Einstein with
$\Ric^{\g}=-n \g$, where $M_+:= M\setminus \Sigma$ and, on this,
$g_+:=x^{-2}g$.

\begin{proposition}\label{peae}
Poincar\'e--Einstein manifolds are scalar negative almost Einstein
structures.  Conversely on a compact manifold $M$ with boundary
$\Sigma$, an almost Einstein structure $(M,[g],I)$ with $|I|^2=1$, and
such that the scale singularity set is  the boundary $\Sigma$, is a
Poincar\'e--Einstein metric.
\end{proposition}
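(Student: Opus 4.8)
The plan is to prove the two implications separately, in each case using Theorem~\ref{key} to pass between almost Einstein scales and parallel tractors, and relying on the following length computation in the Einstein scale $\g$: there $\si\equiv 1$, so by \eqref{pD} $I\stackrel{\g}{=}(1,0,-\sfrac{1}{d}\J^{\g})$ and hence, by \eqref{trmet},
\[
|I|^2=h(I,I)=-\sfrac{2}{d}\,\J^{\g}.
\]
Since $\Ric(\g)=-n\g$ is equivalent to $R^{\g}=-n(n+1)$, hence to $\J^{\g}=-\sfrac{d}{2}$, this identity says precisely that the normalisation $\Ric(\g)=-n\g$ corresponds to $|I|^2=1$.

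For the first statement, let $(M^{d=n+1},g)$ be Poincar\'e--Einstein with defining function $x$, so that $\g=x^{-2}g$ is Einstein, $\Ric(\g)=-n\g$, on $M_+=M\setminus\Sigma$. In the scale $g$ the function $x$ represents a density $\si\in\mathcal{E}[1]$, and $\g=\si^{-2}\bg$. Since $\g$ is Einstein on $M_+$, reading the conformal transformation law of $P$ backwards (the easy half of the equivalence recalled just before \eqref{prim}) shows that $\si$ satisfies \eqref{primc} on $M_+$. But, with $\rho$ given as in \eqref{pD}, the left side of \eqref{primc} is a smooth section of $S^2T^*M[1]$ over all of $M$ by the standing smoothness hypotheses; as it vanishes on the dense set $M_+$ it vanishes identically, so $\si$ solves \eqref{primc} on $M$. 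Hence $(M,[g])$ is almost Einstein, with parallel tractor $I:=\sfrac{1}{n}D\si$ and scale singularity set $\{\si=0\}=\{x=0\}=\Sigma$; evaluating in the Einstein scale, $|I|^2=-\sfrac{2}{d}\J^{\g}=1>0$, which by Theorem~\ref{classthm} is exactly the scalar negative case (and, consistently, $\Sigma$ is totally umbillic).

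For the converse, let $(M,[g],I)$ be almost Einstein with $|I|^2=1$ and scale singularity set the boundary $\Sigma=\partial M$, and set $\si:=h(X,I)\in\mathcal{E}[1]$. By Theorem~\ref{key}, $\si$ is non-vanishing on the open dense set $M_+=M\setminus\Sigma$ and $\g:=\si^{-2}\bg$ is Einstein there; running the displayed identity backwards gives $\J^{\g}=-\sfrac{d}{2}$, whence $\Ric(\g)=\sfrac{1}{d}R^{\g}\g=-n\g$ on $M_+$ (a constant value, since $I$ is parallel). Now fix any representative $g\in[g]$ — each is non-degenerate up to $\Sigma$, as $(M,[g])$ is a conformal manifold with boundary — and let $x$ be the function representing $\si$ in the scale $g$, so that $\g=x^{-2}g$ on $M_+$. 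Its zero set is $\Sigma$ by hypothesis, and along $\Sigma$ we have $\si=0$, so \eqref{pD} gives $I\stackrel{g}{=}(0,\nabla_a x,\rho)$ and then \eqref{trmet} gives $1=|I|^2=|\nabla x|^2_g$; thus $dx\neq 0$ on $\Sigma$ and, after replacing $I$ by $-I$ if necessary, $x\geq 0$. So $x$ is a defining function, $\g=x^{-2}g$ is conformally compact (hence complete) and Einstein with $\Ric(\g)=-n\g$ on $M_+$; that is, $(M,g)$ is Poincar\'e--Einstein. (The relation $|\nabla x|^2_g=1$ on $\Sigma$ moreover shows $\g$ is asymptotically hyperbolic, and with a little more work that $x$ is a special defining function in the sense of Graham--Lee.)

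The one step that is not routine bookkeeping is the passage, in the first implication, from ``$\g$ Einstein on $M_+$'' to ``$\si$ solves \eqref{primc} on all of $M$'' — equivalently, the extension of the a priori interior parallel tractor across $\Sigma$. That is exactly where the smoothness-up-to-the-boundary hypothesis enters, via the density argument above; everything else reduces to the length computation $|I|^2=-\sfrac{2}{d}\J^{\g}$ in the Einstein scale together with the boundary evaluation $|I|^2=|\nabla x|^2_g$ coming from $\si|_\Sigma=0$.
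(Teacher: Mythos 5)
Your proposal is correct and follows essentially the same route as the paper: the identity $|I|^2=-\tfrac{2}{d}\J^{\g}$ computed in the Einstein scale to match the normalisation $\Ric(\g)=-n\g$ with $|I|^2=1$, extension of the solution of \eqref{primc} across $\Sigma$ by continuity for the forward direction, and the boundary evaluation $|I|^2=\bg^{-1}(\nabla\si,\nabla\si)=1$ to exhibit $x=\si/\tau$ as a defining function for the converse. The only cosmetic difference is that you fix the sign by replacing $I$ with $-I$ where the paper adjusts the sign of $\tau$.
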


\begin{proof}
If $M$ is a Poincar\'e--Einstein manifold then, by def\/inition, there is
a def\/ining function $x$ for the boundary so that $\g:=x^{-2}g$ is
Einstein.  Thus $x$ is a smooth function that solves \nn{prim} on the
interior and so, by continuity, also to the boundary. This gives the
result.  In the tractor picture we may equivalently observe that, on
the interior, $I:=\frac{1}{n}D\si$ is parallel with $\si = x\tau$
where $\tau\in \mathcal{E}[1]$ is the scale giving $g$, that is
$g=\tau^{-2}\bg$. So we have $\g=\si^{-2}\bg$. By continuity $I$ is
parallel to the boundary. Obviously the function $h(I,I)$ is constant.
Of\/f the zero set $\Sigma$, and calculating in the scale
$\g=\si^{-2}\bg$, we have $\nabla^{\g}\si=0$. Thus from \nn{trmet} and
\nn{pD} we have $|I|^2=-\frac{2}{d}J^{\g}$ where $J^{\g}$ is the
$\g$-trace of the Schouten tensor $P^{\g}$. From the relationship
between the Schouten and Ricci tensors it follows immediately that the
normalisation $\Ric (\g)=-n \g$ is exactly the condition $|I|^2=1$.

The converse direction is essentially clear from the last
observations. If $I$ is parallel with $|I|^2=1$ then of\/f the zero set
of $\si:=h(X,I)$ we have $\Ric (\g)=-n \g$ where $\g:=\si^{-2}\bg$. On
the other hand along $\Sigma$ we have $\si=0$ and so, since $|I|^2=1$,
it follows from \nn{pD} and \nn{trmet} that $\bg^{-1}(\nabla \si,\nabla
\si)=1$. In particular $\nabla \si$ is non-vanishing along $\Sigma$ and
so $\si$ is a ``def\/ining density'' for~$\Sigma$. Choosing a metric $g$
for $M$ we have $g=\tau^{-2}\bg$ for some non-vanishing weight 1
density~$\tau$. We set $x:=\si/\tau$ and note that $\Sigma$ is the
zero set of~$x$. Since $\Sigma$ is the boundary of $M$ (by a~suitable
sign choice for $\tau$) we may assume without loss of generality that
$x$ is a non-negative function. In terms of $g$, the result
$\bg^{-1}(\nabla \si,\nabla \si)=1$ is equivalent to $|dx|_{g}^2=1$, and so
$x$ is a def\/ining function for the boundary $\Sigma$. (In fact $x$ is
a special def\/ining function in the sense of \cite[Lemma 2.1]{GrSrni}
and~\cite{GL}.)  On the other hand the Einstein metric $\g$ is
$x^{-2}g$ and this completes the case.
\end{proof}

From the Proposition we see that by specifying the almost Einstein
structure $(M,[g],I)$ we have an essentially conformal description of
a Poincar\'e--Einstein structure. We next see that~$I$ encodes more
than simply the Einstein scale. Recall the notion of a~normal tractor,
for a~hypersurface or boundary, as introduced in Section \ref{hyp}.
\begin{proposition}\label{IvsN} Let $(M,[g],I)$ be a Poincar\'e--Einstein
manifold.
Along the boundary $\Sigma$ we have $I_A=N_A$ where $N_A$ is the
normal tractor for $\Sigma$.
\end{proposition}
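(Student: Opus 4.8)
\medskip

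\noindent\textbf{Proof idea.}
The plan is to verify the equality slot by slot in a single, arbitrary metric scale $g$ from the conformal class; this suffices because both $I_A$ and $N_A$ are conformally invariant sections of $\mathcal{T}|_\Sigma$. By Theorem~\ref{key} together with \nn{pD}, in the scale $g$ one has $I\stackrel{g}{=}\big(\si,\nabla_a\si,\sfrac1d(\Delta\si-\J\si)\big)$ with $\si=h(X,I)$, and by Proposition~\ref{peae} the set $\Sigma$ is exactly the zero locus of $\si$ while $|I|^2=1$. Thus along $\Sigma$ the top slot of $I$ is $\si=0$, matching the top slot $0$ of $N$; and since $\si$ vanishes on $\Sigma$, the formula \nn{trmet} for the tractor metric forces $\bg^{ab}(\nabla_a\si)(\nabla_b\si)=|I|^2=1$ there, so $n_a:=\nabla_a\si$ is a unit conormal along $\Sigma$, and --- with the sign of $\si$ fixed compatibly with the conormal used to build $N$ --- it is precisely the middle slot of the normal tractor.

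The one substantive step is to identify the bottom slot $\rho:=\sfrac1d(\Delta\si-\J\si)$ of $I$ with $-H^g$. Here I would invoke that $\si$ solves the prolonged equation \nn{primc}, so that $\nabla_a\nabla_b\si=-P_{ab}\si-\bg_{ab}\rho$ holds as a field identity near $\Sigma$; hence the natural extension $n_a=\nabla_a\si$ of the conormal satisfies $\nabla_a n_b=-P_{ab}\si-\bg_{ab}\rho$, which along $\Sigma$ (where $\si=0$) collapses to $\nabla_a n_b=-\bg_{ab}\rho$. Substituting this into the mean-curvature formula $H^g=\sfrac1{d-1}\big(\nabla_a n^a-n^an^b\nabla_a n_b\big)$ --- legitimate since $n_a=\nabla_a\si$ restricts to a unit conormal on $\Sigma$ and the formula is insensitive to the extension --- gives $\nabla_a n^a=-d\rho$ and $n^an^b\nabla_a n_b=-\rho$ along $\Sigma$, so $H^g=-\rho$. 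Therefore $\rho=-H^g$, which is the bottom slot of $N$, and assembling the three slots yields $I_A=N_A$ on $\Sigma$.

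The only real obstacle I anticipate is the minor sign bookkeeping: one must choose the sign of $\si$ so that $\nabla_a\si$ agrees with the conormal defining $N$. This is harmless, since $I_A$ and $N_A$ change sign together under $\si\mapsto-\si$, and the non-negativity of the defining function established in Proposition~\ref{peae} fixes the choice. Beyond that the proof needs nothing more than \nn{pD}, \nn{trmet}, \nn{primc}, and the definitions of $H^g$, $\Delta$, and the tractor connection; in particular no curvature identities and no integration are required.
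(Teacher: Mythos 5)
Your proposal is correct and follows essentially the same route as the paper: express $I$ in a scale via \nn{pD}, use $\si|_\Sigma=0$ and $|I|^2=1$ with \nn{trmet} to identify the top and middle slots, and then use the parallel/prolonged equation $\nabla_a\nabla_b\si=-P_{ab}\si-\bg_{ab}\rho$ restricted to $\Sigma$ in the mean-curvature formula to get $\rho=-H^g$. Your explicit handling of the sign normalization of $\si$ is a small point the paper leaves implicit, but otherwise the arguments coincide.
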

\begin{proof}
First note that since $I_A$ has (conformally
invariant) length 1 everywhere this is in particular true along
$\Sigma$. (Of course $N_A$ has this property along $\Sigma$.)

Now
\[
I_A=\frac{1}{d}D_A\si\stackrel{g}{=}
\begin{pmatrix} \si \cr  \nabla_a \si\cr \frac{1}{d}(\Delta \si - \J \si)
\end{pmatrix} .
\]
 Let us write $n_a := \nabla_a \si$.  Along
$\Sigma$ we have $\si=0$, and so
\[
I_A|_\Sigma  \stackrel{g}{=} \begin{pmatrix} 0 \cr  n_a \cr
 \frac{1}{d}\Delta \si  \end{pmatrix}.
\] Note that from \nn{trmet} and $|I|^2=1$ we have that
$\bg^{ab}n_an_b=1$ on~$\Sigma$, and $n_a$ is seen to be a weight~1
unit co-normal for $\Sigma$.

Next we calculate the mean curvature $H$ in terms of $\si$.  Recall
that the second fundamental form of $\Sigma$ is $\II_{ab}=\Pi^c_a\Pi^d_b
\nabla_c n_b$ (along $\Sigma$) where $\Pi$ is the orthogonal projection
operator given by
\[
\Pi^c_a=\delta^c_a-n^cn_a .
\]
By construction this is independent of how $n_a$ is extended of\/f $\Sigma$.
Thus along $\Sigma$ we have
\[
(d-1)H=\nabla^an_a-n^an^b\nabla_b n_a .
\]
Now since $I_A$ is parallel we have
\[
\nabla_b n_a =\nabla_b\nabla_a \si = -P_{ab}\si - \frac{1}{d}\bg_{ab}(\Delta \si-\J \si).
\]
Along $\Sigma$ this simplif\/ies to $\nabla_b n_a=  -\frac{1}{d}\bg_{ab}\Delta \si$, and
so $\nabla^an_a=-\Delta \si$. Thus we have
\[
(d-1)H=-\left(1-\frac{1}{d}\right)\Delta \si\quad \Rightarrow \quad H
=-\frac{1}{d}\Delta \si ,
\]
whence
\[
I_A|_\Sigma  \stackrel{\tilde{g}}{=} \begin{pmatrix} 0 \cr  n_a \cr
 -H  \end{pmatrix} ,
\]
as required.
\end{proof}

Using Proposition \ref{umbillic}, the following is an immediate
consequence of the Proposition.
\begin{corollary} \label{umC}
If $(M,[g],I)$ is a Poincar\'e--Einstein manifold then  the boundary $\Sigma=\partial M$ is
totally umbillic.
\end{corollary}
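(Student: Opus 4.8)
The statement to prove is Corollary \ref{umC}: if $(M,[g],I)$ is a Poincaré–Einstein manifold, then the boundary $\Sigma = \partial M$ is totally umbilic.

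The plan is essentially to chain together two results already established in the excerpt. Proposition \ref{IvsN} tells us that along $\Sigma$ the parallel tractor $I_A$ agrees with the normal tractor $N_A$. Since $I$ is parallel for the tractor connection on all of $M$ (this is the defining property of an almost Einstein structure, and a Poincaré–Einstein manifold is one by Proposition \ref{peae}), its restriction to $\Sigma$ is in particular constant along $\Sigma$ — that is, the tangential tractor covariant derivative of $I_A$ vanishes along $\Sigma$. Therefore $N_A$ is constant along $\Sigma$ in the sense required by Proposition \ref{umbillic}. Applying Proposition \ref{umbillic} directly yields that $\Sigma$ is totally umbilic.

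So the proof is just: (1) invoke that $I$ is parallel, hence $\nabla_a I_B = 0$ everywhere, in particular the tangential derivatives along $\Sigma$ vanish; (2) invoke Proposition \ref{IvsN} to replace $I_A$ by $N_A$ along $\Sigma$, so that $N$ is constant along $\Sigma$; (3) quote Proposition \ref{umbillic}. There is no real obstacle here — this is a short deductive corollary, and the only thing to be careful about is the logical ordering (one needs $I$ parallel before restricting, and the identification $I|_\Sigma = N$ from the previous proposition). I would write it in one or two sentences.

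\begin{proof}
Since $(M,[g],I)$ is a Poincaré--Einstein manifold, it is in particular almost Einstein (Proposition~\ref{peae}), so the standard tractor $I$ is parallel for the tractor connection on all of $M$; in particular its tangential covariant derivative along $\Sigma$ vanishes, i.e.\ $I$ is constant along $\Sigma$. By Proposition~\ref{IvsN}, $I_A=N_A$ along $\Sigma$, where $N$ is the normal tractor of $\Sigma$. Hence $N$ is constant along $\Sigma$, and Proposition~\ref{umbillic} gives that $\Sigma$ is totally umbillic.
\end{proof}
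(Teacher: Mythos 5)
Your proof is correct and is exactly the argument the paper intends: the corollary is stated there as an immediate consequence of Proposition~\ref{IvsN} together with Proposition~\ref{umbillic}, using that the parallel tractor $I$ restricts to a tractor that is constant along $\Sigma$ and coincides with $N$. Nothing is missing.
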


\subsection[The Poincar\'e-Einstein model space]{The Poincar\'e--Einstein model space} \label{model}

We shall construct here a model for the Poincar\'e--Einstein space, a
model which f\/its with the conformal picture developed above.

Consider $\mathbb{R}^{d+2}$ equipped with a non-degenerate bilinear
form $\mathcal{H}$ of signature $(d+1,1)$. The {\em null cone}
$\mathcal{N}$ of zero-length vectors forms a quadratic variety and
(projectivising this picture) the corresponding quadric in
$\mathbb{P}_{d+1}$ is topologically a sphere $\mathbb{S}^d$.  Let us
write $\mathcal{N}_+$ for the forward part of $\mathcal{N}\setminus
\{0 \}$ and write $\pi$ for the natural submersion $\mathcal{N}_+\to
\mathbb{S}^d$.  Each point $p\in \mathcal{N}_+$ determines a positive
def\/inite inner product on $T_{x=\pi{p}}\mathbb{S}^d$ by
$g_x(u,v)=\mathcal{H}_p(u',v')$ where $u',v'\in T_p\mathcal{N}_+$ are
lifts of $u,v\in T_x\mathbb{S}^d$. For a given vector $u\in T_x
\mathbb{S}^{d}$ two lifts to $p\in \mathcal{N}_+$ dif\/fer by a vertical
vector f\/ield. Since any vertical vector is normal (with respect to
$\mathcal{H}$) to the cone it follows that $g_x$ is independent of the
choices of lifts. Clearly then, each section of $\pi$ determines a
metric on $\mathbb{S}^d$ and by construction this is smooth if the section
is. Now, viewed as a metric on $T\mathbb{R}^{d+2}$, $\mathcal{H}$ is
homogeneous of degree 2 with respect to the standard Euler vector
f\/ield $E$ on $\mathbb{R}^{d+2}$, that is $\mathcal{L}_E \mathcal{H}=2
\mathcal{H}$, where $\mathcal{L}$ denotes the Lie derivative. In
particular this holds on the cone, which we note is generated by $E$.
On the other hand if a vector f\/ield on $\mathbb{R}^{d+2}\setminus \{
0\}$ is the lift of a vector f\/ield on $\mathbb{S}^d$ then it is
necessarily homogeneous of degree 0.  It follows that that two
dif\/ferent sections of $\pi:\mathcal{N}_+\to \mathbb{S}^d$ determine
conformally related metrics.  (We have stated this globally, but of
course the same holds locally over neighbourhoods of $\mathbb{S}^d$.)
We will see shortly that the standard sphere metric is in the
conformal class. Thus $\mathbb{S}^d$ is equipped canonically with the
standard conformal structure for the sphere (but not with a preferred
metric from this class).  This is the standard model of a homogeneous
``f\/lat'' conformal structure. Evidently we may identify
$\mathcal{N}_+$ as the (total space) of the $\mathbb{R}_+$-ray-bundle
of metrics over $\mathbb{S}^d$; the bundle consisting of metrics from
this conformal class.

We can construct a Poincar\'e--Einstein metric over a cap of the
sphere $\mathbb{S}^d$ as follows.  Take a~covector $I\in (\mathbb{R}^{d+2})^*$ of
length 1 and by the standard parallel transport (of $\mathbb{R}^{d+2}$ viewed
as an af\/f\/ine structure) view this as a constant section of
$T^*\mathbb{R}^{d+2}$. Then, writing $X^A$ for the standard coordinates on
$\mathbb{R}^d$, the intersection of the hyperplane $I_AX^A=1$ with $\mathcal{N}_+$,
which we shall denote~$S_+$, is a section of $\pi$ over an open cap
$C_+$ of the sphere.  Let us write $\g$ for the metric $S_+$ gives on
$C_+$. On the other hand the hyperplane $I_AX^A=0$ (parallel to the
previous) intersects $\mathcal{N}_+$ in a cone of one lower dimension. The
image of this under $\pi$ is a copy of $\mathbb{S}^{n}$ embedded in $\mathbb{S}^d$
(where as usual $d=n+1$). With respect to the given manifold structure
on~$\mathbb{S}^d$, this $\mathbb{S}^n$ is a boundary for its union with $C_+$ which we
denote by $C$. This follows because any null line though the origin
and parallel to the $I_AX^A=1$ hyperplane lies in the hyperplane
$I_AX^A=0$, whereas every other null line through the original meets
the $I_AX^A=1$ hyperplane. The open cap $C_+$ parametrises those null
lines which meeting this hyperplane in the forward null cone (i.e. at
a point of $S_+$).  Note that the boundary $\mathbb{S}^{n}=\partial C$
canonically has no more than a conformal structure. This may obviously
be viewed as arising as a restriction of the conformal structure
on $\mathbb{S}^{d}$. Equivalently we may view its conformal structure as
arising in the same way as the conformal structure on $\mathbb{S}^d$, except
in this case by the restriction of $\pi$ to the sub-cone $I_AX^A=0$ in
$\mathcal{N}_+$, and from $\mathcal{H}$ along this sub-cone.  Note that any
metric from the conformal class on $\mathbb{S}^d$ determines a metric on~$C$
by restriction. Denoting one such by $g$ note that on $C^+$ this is
conformally related to~$\g$.

Write $\bg$ for the restriction of $\mathcal{H}$ to vector f\/ields in
$T\mathcal{N}_+$ which are the lifts of vector f\/ields on~$\mathbb{S}^d$. Then for any
pair $u,v\in \Gamma(T\mathbb{S}^d)$, with lifts $u'$, $v'$, $\bg(u',v')$ is a
function on $\mathcal{N}_+$ homogeneous of degree 2, and which is independent
of how the vector f\/ields were lifted. Since $\mathcal{N}_+$ may be identif\/ied
with the (total space of the) bundle of conformal metrics,
$\bg(u',v')$ may be identif\/ied with a conformal density of weight $2$
on $\mathbb{S}^d$. This construction determines a section of
$S^2T^*\mathbb{S}^d\otimes E[2]$ that we shall also denote by $\bg$. This is
the usual conformal metric for the sphere. Let us henceforth identify,
without further mention, each function on $\mathcal{N}_+$ which is homogeneous
of degree $w\in \mathbb{R}$ with the corresponding conformal density of weight
$w$. With $\si:=I_AX^A$, as above, note that $\si^{-2}\bg$ is
homogeneous of degree 0 on $\mathcal{N}_+$ and agrees with the restriction of
$\mathcal{H}$ along~$S_+$. Thus on $C_+$, $\si^{-2}\bg=\g$, the metric
determined by~$S_+$.  Similarly on $C$ we have $g=\tau^{-2}\bg$, where
$\tau$ is a non-vanishing conformal density of weight 1.  So on $C_+$,
$\g=x^{-2}g$ where $x$ is the {\em function} $\si/\tau$.

We may now put these observations into a more general context via the
tractor bundle on~$\mathbb{S}^d$. Let us write $\rho^t$ for the natural action
of $\mathbb{R}_+$ on $\mathcal{N}_+$ and then $\rho^t_*$ for the derivative of
this. Now modify the latter action on $T\mathbb{R}^{d+2}$ by rescaling: we
write $t^{-1}\rho^t_*$ for the action
of $\mathbb{R}_+$ on $T\mathbb{R}^{d+2}$ which takes $u\in T_p\mathbb{R}^{d+2}$
to $t^{-1}(\rho_*^t u)\in T_{\rho^t(p)}\mathbb{R}^{d+2}$. Note that $u$ and
$t^{-1}(\rho_*^t u) $ are parallel, according to the standard af\/f\/ine
structure on $\mathbb{R}^{d+2}$. It is easily verif\/ied that the quotient of
$T\mathbb{R}^{d+2}|\mathcal{N}_+$ by the $ \mathbb{R}_+$ action just def\/ined is a rank $d+2$
vector bundle $\mathcal{T}$ on $M$. Obviously the parallel transport of
$\mathbb{R}^{d+2}$ determines a parallel transport on $\mathcal{T}$, that is a
connection $\nabla$. Since $\mathbb{R}^{d+2}$ is totally parallel this
connection is f\/lat. The twisting of $\rho^t_*$ to $t^{-1}\rho^t_*$ is
designed so that the metric $\mathcal{H}$ on $\mathbb{R}^{d+2}$ also descends
to give a (signature $(d+1,1)$) metric $h$ on $\mathcal{T}$ and clearly this
is preserved by the connection.  In fact $(\mathcal{T},h,\nabla)$ is the usual
normal standard tractor bundle. This is proved under far more general
circumstances in \cite{CapGoamb} (see also \cite{GoPetCMP}); it is
shown there that the tractor bundle arises from the Fef\/ferman--Graham
ambient metric by an argument generalising that above. In this picture
the Euler vector f\/ield $E=X^A\partial/\partial X^A$ (using the
summation convention), which generates the f\/ibres of $\pi$, descends
to the canonical tractor f\/ield $X\in \mathcal{T}[1]$.

It follows from these observations that, since the vector f\/ield $I$ is
parallel on $\mathbb{R}^{d+2}$, its restriction to $\mathcal{N}_+$ is equivalent to a
parallel section of $\mathcal{T}$; we shall also denote this by $I$. So this
is an almost Einstein structure on $\mathbb{S}^d$ and hence (by restriction)
on $C$; $|I|^2=1$ means that the almost Einstein structure we recover
has $\Ric(\g)=-n \g$ on $C_+$.  Evidently the conformally invariant
``top slot'' of the tractor $I$ is $\si = X^AI_A=h(I,E)$ (as a
homogeneous function -- but as mentioned above homogeneous functions
on $\mathcal{N}$ may be identif\/ied with conformal densities on $C$). The zero
set $\Sigma$ for this is exactly $\mathbb{S}^n=\partial C$; recall any null
line of $\mathcal{N}_+$ that does not lie in the $I_AX^A=0$ hyperplane meets
the $I_AX^A=1$ hyperplane (where, viewing $\si$ as a homogeneous
function, we have $\si=1$). So now it follows from Proposition
\ref{peae} that $(C,[g],I)$ is a~Poincar\'e--Einstein space.

Of course this result may easily be verif\/ied by direct calculation,
but such a calculation would only obscure this simple geometric
picture.  Note, for example, that Proposition \ref{IvsN} is realised
geometrically here as the fact that the same constant ambient vector
$I_A$ def\/ines both the section~$S_+$, that gives the metric on $C_+$,
and the boundary manifold $\Sigma$ via the subcone where $\mathcal{N}_+$ meets
$I_AX^A=0$.

Finally we should say that in fact $I$ and the cone $\mathcal{N}$ determine a
scalar negative almost Einstein structure on the entire sphere
\cite{allein}.  The Poincar\'e-metric is just the $\si\geq 0$ part of
this.  A similar construction using a constant vector with norm
satisfying $|I|^2=-1$ gives the standard (Einstein) metric on the
sphere, while a Euclidean (i.e.\ metric f\/lat away from the scale
singularity point) almost Einstein structure on the sphere minus a
point is obtained by using a null constant vector~$I$. Note that the
group ${\rm SO}(h)~\cong ~{\rm SO}(d+1,1)$ acts transitively on
$\mathcal{N}_+$. Thus, in each case, the isotropy group of the constant vector
$I$ is a subgroup of~${\rm SO}(h)$ which acts transitively on the
corresponding almost Einstein interior. For example in the original
case, with $|I|^2=1$, the isotropy group is isomorphic to SO$(d,1)$
and acts transitively on the hyperbolic cap $C_+$.
 Similarly in the other cases: if we take $I$ such that $|I|^2=-1$
then the isotropy group is a copy of ${\rm SO}(d+1)$ which acts
transitively on the copy of the sphere that $I$ determines; for null
$I$ the isotropy group is the Euclidean group f\/ixing $I$ (and is a
subgroup of the parabolic stabilising the line generated by $I$).

\section[The Dirichlet--Neumann machinery on Poincar\'e--Einstein manifolds]{The Dirichlet--Neumann machinery\\ on Poincar\'e--Einstein manifolds} \label{DNM}

Here we wish to discuss the specialisations of certain key
dif\/ferential operators to Poincar\'e--Einstein manifolds and the implications
for a Dirichlet--to--Neumann construction
along the lines of that in \cite{BrGonon}. Throughout this section we take
$(M^d,[g],I)$ to be a Poincar\'e--Einstein manifold  with boundary
$\Sigma^n =\partial M$. We write $\bg$ for the conformal metric on $M$
and $\si\in \mathcal{E}[1]$ for the almost Einstein scale $h(X,I)$, that is
$\g:=\si^{-2}\bg$ is Einstein of negative scalar curvature on the
interior $M_+$.

First we make some elementary observations concerning the conformal calculus.

\subsection{The scattering Laplacian}\label{scat}

Recall that $I_A= \frac{1}{d}D_A\si$ is the parallel tractor
corresponding to the Einstein scale. First observe that from \nn{dD}
and Proposition \ref{IvsN} it follows that $I^AD_A$ gives the
conformal Robin operator $\delta$ along~$\Sigma$ (at least on
densities or tractors of weight $w\neq 1-d/2$). Here we expose the
further role of~$I^AD_A$.

For $u\in\mathcal{E}[w]$ in $M$ we wish to calculate $I^A D_A u$ on the interior $M_+$.
In particular let us express this in terms of  the interior Einstein metric
$\g$. We have $\nabla^{\g} \si=0$, and so
\[
I_A D^A u = \si \left(\begin{array}{ccc} -\J/d & 0 & 1 \end{array} \right)
 \left(\begin{array}{c} w (d+2w-2) u \\
(d+2w-2)\nabla^{\g} u \\
\Delta u - w \J u \end{array} \right),
\]
where on the right-hand-side $I$ and $D^A$ are expressed in terms of
the metric $\g$, but we are still allowing the tensorial objects to be
density valued. So we obtain
\[
I^AD_A u =\si \left(\Delta u-\frac{2}{d}\J(d+w-1)w u \right).
\]

Now let us use $d=n+1$ to replace $d$ and,
for reasons that will shortly be clear, set
 $s:=n+w$.
Then  we obtain
\begin{gather}\label{preslap}
I^AD_A u = \si \left(\Delta+\frac{2\J}{d}s(n-s)\right)u .
\end{gather}
Here $\Delta $ and $\J$ are (density-valued with weight $(-2)$) and
$\Delta^{\g}= \si^2 \Delta$, $J^{\g}=\si^{2}\J$. But for the Einstein metric
$\g$ with $\Ric(\g)=-n\g$ we have
 $J^{\g}=-d/2$ and so  this simplif\/ies to
\begin{gather}\label{slap}
\si I^AD_A u =  (\Delta^{\g}-s(n-s))u ,
\end{gather}
which agrees with the Laplacian controlling the scattering construction of
Graham--Zworski, see~(3.2) of \cite{GrZ}. For convenience let us refer
to this as the {\em scattering Laplacian}.

\subsection[The GJMS operators on a Poincar\'e-Einstein manifold]{The GJMS operators on a Poincar\'e--Einstein manifold}

Using the
Fef\/ferman--Graham ambient metric Graham--Jenne--Mason--Sparling (GJMS)
constructed in \cite{GJMS} a large family of  conformally invariant
operators $P_k$ between density bundles. These take the form
\[ P_k= \Delta^{k/2}+ \text{lower~order~terms}.
\] In fact $P_2$ is the usual conformal Laplacian from physics and, in
the case of Riemannian signature, as we have here, is often termed the
Yamabe operator. $P_4$ is due to S.~Paneitz, while $P_6$ was
constructed by V.~W\"unsch. Except at these low orders, the explicit
details of the GJMS operators are complicated \cite{GoPetCMP}, and no
general formula is available. For these and other related reasons the
operators $\Box_k$, with their explicit and manifestly formally
self-adjoint formulae as in Proposition~\ref{lap}, were preferred for
the construction of Dirichlet--to--Neumann operators in \cite{BrGonon}.

However the GJMS operators simplify dramatically on Einstein
manifolds; from \cite[Proposition 7.9]{FGnew} or \cite[Theorem
1.2]{powerslap} on an Einstein manifold $(M^d,\g)$ we have
\begin{gather}\label{powers}
P_{k}= \prod_{\ell=1}^{k/2}(\Delta^{\g} + \lambda_\ell),
\end{gather}
where $\lambda_\ell= {\rm Sc}^{\g} (d+2\ell-2)(d-2\ell)/(4d(d-1))$ and
$\Delta^{\g}=-\nabla^a\nabla_a$ is the Laplacian for $\g$.  This generalises
the situation on the sphere, as observed some time ago by Branson
\cite{Tomsharp}.

It will be useful for us to know where \nn{powers} comes from in the
tractor picture.  On a conformally Einstein manifold $(M,[g])$ if
$\si$ is an Einstein scale, with corresponding metric $\g$ and
parallel tractor $I_A:=\frac{1}{d}D_A \si$, then we may form the
operator $P^{\g}_k:\mathcal{E}[\frac{k-n}{2}]\to \mathcal{E}[-\frac{k+n}{2}]$ ($k\in
2\mathbb{N}$) by
\begin{gather}\label{defP}
P^{\g}_k u= \si^{1-k/2} I^{A_2}\cdots I^{A_{k/2}}
\Box D_{A_2}\cdots D_{A_{k/2}} u .
\end{gather}
By construction this depends on $\g$. Surprisingly if $\g'$ is another
Einstein metric in the conformal class $\g$ then $P^{\g}_k=P^{\g'}_k$
(see \cite[Theorem 3.1]{powerslap}). So for any $k\in 2\mathbb{N}$,
$P_k^{\g}$ is a canonical operator on conformally Einstein
manifolds. Thus if the conformal class is f\/ixed we may omit the $\g$
in $P^{\g}_k$. This is not a clash of notation since in fact on
conformally Einstein manifolds, and for the $k$ where the GJMS
operators $P_k$ are def\/ined, they agree with the operator \nn{defP} \cite[Theorem 3.3]{powerslap}.

The factorisation in \nn{powers} arises as follows. Using
that $I$ is parallel, we may rewrite \nn{defP} as
\[
P_k u=
\si^{-k/2} (I^{A_{k/2}}D_{A_{k/2}})\circ \cdots \circ  (I^{A_2}D_{A_2})
\circ (I^{A_1}D_{A_1}) u.
\] As we observed below \nn{preslap}, provided we work in an Einstein
scale $\g$, then each factor $(I^{A_i}D_{A_i})$ may be re-expressed in
the form $\si^{-1}(\Delta^{\g}+\lambda_i)$. The value of the constant
$\lambda_i$ ref\/lects the conformal weight of terms to its right. The
scale $\si$, which gives $\g=\si^{-2}\bg$, is parallel for the
Levi-Civita connection $\nabla^\g$, and so the factors of $\si$
cancel, at least after also replacing the density~$u$, as well as the
density-valued operators and curvature $\J$ with their unweighted
equivalents.  We arrive at \nn{powers}. We will term \nn{defP} a GJMS
operator including for the high $k$ in even dimensions where the GJMS
operators were not def\/ined.

Specialising to a Poincar\'e--Einstein manifold we use the observation
in \nn{slap} to suggestively re-express $P_k u$ ($u\in
\mathcal{E}[\frac{k-n}{2}]$) as a composition of scattering Laplacians
\begin{gather}\label{scp}
 P_{k} f = (\Delta^{\g}-s_{k/2}(n-s_{k/2}))\circ \cdots \circ
(\Delta^{\g}-s_{2}(n-s_{2})) \circ (\Delta^{\g}-s_{1}(n-s_{1})) f ,
\end{gather}
where $f=\si^{\frac{n-k}{2}}u$ is the function equivalent to $u$ in
the trivialisation of $\mathcal{E}[\frac{k-n}{2}]$ af\/forded by $\si$.  Since
$u$ has weight $w_0:=(k-n)/2$, and each factor $I^AD_A$ lowers weight
by 1 unit, we have $s_i=n+w_0+1-i=(k+n)/2+1-i$, for $i=1,\dots ,k$.

\subsection{Algebraic decompositions}\label{adec}

\newcommand{\bF}{\mathbb{F}}
\newcommand{\id}{\operatorname{id}}
\newcommand{\cR}{\mathcal{R}}

We digress brief\/ly to recall some rather general considerations from
the work \cite{GoSiDec} with \v Silhan.  Let $\mathcal{V}$ denote a vector
space over a f\/ield $\bF$. Suppose that $\mathcal{P}:\mathcal{V}\to \mathcal{V} $ is a linear
operator that may be expressed as a composition
\[
\mathcal{P}=\mathcal{P}_0\mathcal{P}_1\cdots \mathcal{P}_\ell,
\]
where the linear operators $ \mathcal{P}_i:\mathcal{V}\to \mathcal{V}$,
 $i=0,\dots ,\ell $, are mutually commuting. One might hope that we
 can characterise the range space $\cR(\mathcal{P})$ and null space $\mathcal{N}(\mathcal{P})$ of
 $\mathcal{P}$ in terms of data for the factors $\mathcal{P}_i$. This is straightforward if the
 $\mathcal{P}_i$ are each invertible, but in fact far weaker conditions
 suf\/f\/ice to make signif\/icant progress in this direction.  One
 situation which is particularly useful is as follows.  Suppose that
 we there are linear operators $Q_i:\mathcal{V}\to \mathcal{V}$, $i=0,1,\dots, \ell$,
 that yield a decomposition of the identity,
\begin{gather}\label{iddec}
id_V=Q_0\mathcal{P}^0+\cdots+Q_\ell \mathcal{P}^\ell,
\end{gather}
where $\mathcal{P}^i:=\Pi_{i\neq j=0}^{j=\ell} \mathcal{P}_i, i=0,\dots ,\ell$;
and the $\mathcal{P}_i$s and the $Q_j$s are mutually commuting in that
\[
 \mathcal{P}_iQ_j=Q_j\mathcal{P}_i,\qquad i,j\in\{0,\dots,\ell\}.
\]
This is suf\/f\/icient to give a 1-1 relationship
between solutions $u\in \mathcal{V}$ of the inhomogeneous problem $\mathcal{P} u=f$ and
 solutions $(u_0,\dots
,u_\ell)\in\oplus^{\ell+1}\mathcal{V}$ of the problem
\[
\mathcal{P}_0 u_0=f, \quad \dots , \quad \mathcal{P}_\ell u_\ell =f,
\]
see \cite[Theorem 2.2]{GoSiDec}.
Thus for example the range of $\mathcal{P}$ is exactly the intersection of the
 range spaces for the components $\mathcal{P}_i$.
The map from $u$,  solving $\mathcal{P} u=f$, to solutions of the system is obvious:
\[ u\mapsto (\mathcal{P}^0 u, \dots ,\mathcal{P}^\ell u ) .
\]
One key point is that \nn{iddec} gives an
inverse by
\[
(u_0,\dots, u_\ell)\mapsto \sum_{i=0}^{i=\ell}
Q_i u_i.
\]

Important for us here is that, given the situation above, then for
each $i\in\{0,\dots ,\ell \}$, we have
\[
Q_iP^i:\ \mathcal{N}(\mathcal{P})\to \mathcal{N}(\mathcal{P}_i)
\] and this is a projection. Thus we obtain a direct decomposition of
the null space $\mathcal{N}(\mathcal{P})$. In applying these results in the case that
the $\mathcal{P}_i$ are partial dif\/ferential operators we should expect that
in general the $Q_i$, when they exist, will be pseudo-dif\/ferential
operators. However remarkably there are a large class of situations
where we can solve \nn{iddec} algebraically. For example, for the case
of partial dif\/ferential operators $\mathcal{P}_i$ it can be that the $Q_j$ are
again dif\/ferential and obtained algebraically from the formulae for
the $\mathcal{P}_i$.  The very simplest situation of this is in fact exactly
what we need here and is as follows. This is a special case from
Theorem 1.1 of \cite{GoSiDec}.
\begin{theorem} \label{fundthm}
Let $\mathcal{V}$ be a vector over the field $\mathbb{F}$.
Suppose that $E$ is a linear endomorphism on~$\mathcal{V}$, and $P=P[E]:\mathcal{V}\to \mathcal{V}$ is a linear operator polynomial in
$E$ which factors as
\[ P[E] = (E-\mu_1) \cdots (E-\mu_p), \]
where the scalars $\mu_1,\ldots,\mu_p \in \mathbb{F}$ are mutually distinct.
Then the solution space $\mathcal{V}_P$, for $P$, admits a canonical
and unique direct sum decomposition
\begin{gather}
\mathcal{V}_P=\oplus_{i=0}^\ell \mathcal{V}_{\mu_i},
\end{gather}
where, for each $i$ in the sum, $\mathcal{V}_{\mu_i}$ is the solution
space for $E-\mu_i$. The projection $\Proj_i: \mathcal{V}_P\to
\mathcal{V}_{\mu_i}$ is given by the formula
\[ \Proj_i = Q_i \prod_{i \not= j=1}^{j=p} (E-\mu_j),
   \qquad \text{where} \quad
   Q_i = \prod_{i \not= j=1}^{j=p} \frac{1}{\mu_i - \mu_j}. \]
\end{theorem}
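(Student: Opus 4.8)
{}The plan is to prove Theorem~\ref{fundthm} by a direct application of the partial fractions identity for the scalar rational function $1/P[E]$, interpreted at the level of polynomials in the single endomorphism $E$. Since $P[E]=(E-\mu_1)\cdots(E-\mu_p)$ with the $\mu_i$ mutually distinct, the classical partial fraction decomposition gives
\[
\frac{1}{P(t)} = \sum_{i=1}^{p} \frac{Q_i}{t-\mu_i}, \qquad Q_i = \prod_{i\neq j=1}^{j=p}\frac{1}{\mu_i-\mu_j},
\]
and clearing denominators yields the polynomial identity $1 = \sum_{i=1}^p Q_i\,\prod_{i\neq j}(t-\mu_j)$. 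Substituting $t=E$ (legitimate because everything is a polynomial in the one operator $E$, hence all factors commute) gives the operator identity
\[
\id_{\mathcal{V}} = \sum_{i=1}^{p} Q_i\,P^i[E], \qquad P^i[E] := \prod_{i\neq j=1}^{j=p}(E-\mu_j),
\]
which is exactly an instance of \eqref{iddec} with $\mathcal{P}=P[E]$, $\mathcal{P}_i=E-\mu_i$, and $\mathcal{P}^i=P^i[E]$; the commutativity hypotheses of that setup are automatic here. This is the one computational input and it is entirely routine.

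From here I would invoke the general machinery already recalled in the excerpt. By the discussion following \eqref{iddec} (i.e.\ \cite[Theorem 2.2]{GoSiDec} specialised to $f=0$), each $\Proj_i := Q_i P^i[E]$ restricts to a map $\mathcal{N}(P[E])\to\mathcal{N}(E-\mu_i)$, i.e.\ $\mathcal{V}_P\to\mathcal{V}_{\mu_i}$, and these maps are projections summing to the identity on $\mathcal{V}_P$. Concretely: if $u\in\mathcal{V}_P$ then $\sum_i\Proj_i u = u$ by the identity above; each $\Proj_i u$ lies in $\mathcal{V}_{\mu_i}$ because $(E-\mu_i)\Proj_i u = Q_i\,\prod_{j=1}^{p}(E-\mu_j)u = Q_i P[E]u = 0$; and on $\mathcal{V}_{\mu_i}$ itself one has $\Proj_i = \id$ while $\Proj_{i'}=0$ for $i'\neq i$, since $P^{i'}[E]$ contains the factor $E-\mu_i$. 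Hence $\mathcal{V}_P = \bigoplus_{i=1}^{p}\mathcal{V}_{\mu_i}$ as a direct sum, with the stated projection formula.

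For uniqueness of the decomposition, I would argue that any direct sum decomposition $\mathcal{V}_P = \bigoplus_i \mathcal{V}_{\mu_i}'$ with $\mathcal{V}_{\mu_i}'\subseteq\mathcal{V}_{\mu_i}$ must have $\mathcal{V}_{\mu_i}'=\mathcal{V}_{\mu_i}$: indeed $\mathcal{V}_{\mu_i}\subseteq\mathcal{V}_P$ always, so $\mathcal{V}_{\mu_i}$ decomposes along the $\mathcal{V}_{\mu_{i'}}'$, but applying $E-\mu_{i'}$ for $i'\neq i$ is injective on $\mathcal{V}_{\mu_i}$ (as $\mu_i\neq\mu_{i'}$ and $(E-\mu_{i'})$ restricted to $\mathcal{V}_{\mu_i}$ equals multiplication by the nonzero scalar $\mu_i-\mu_{i'}$), forcing the $\mathcal{V}_{\mu_{i'}}'$-components of any vector in $\mathcal{V}_{\mu_i}$ to vanish. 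So the decomposition into $\mu_i$-eigenspaces of $E$ inside $\mathcal{V}_P$ is canonical, and the projections are uniquely determined; that they are given by the displayed polynomial-in-$E$ formula follows from the computation above.

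I do not anticipate a genuine obstacle here: the theorem is essentially the operator-valued partial fractions identity plus bookkeeping, and the substantive content has already been packaged in the cited results from \cite{GoSiDec}. The only mild subtlety to handle carefully is the uniqueness claim — making precise in what category (the projections $\Proj_i$ being the unique ones that are polynomial in $E$, or equivalently the unique ones compatible with the $E$-module structure) — but since all operators in sight are polynomials in the single endomorphism $E$, commutativity is free and the argument is as sketched.
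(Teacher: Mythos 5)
Your proposal is correct and follows essentially the same route as the paper, which defers the proof to \cite[Theorem 1.1]{GoSiDec} but recalls precisely this mechanism beforehand: the scalar partial-fractions identity supplies the decomposition of the identity $\mathrm{id}_{\mathcal V}=\sum_i Q_i\mathcal P^i$ algebraically, and the projection properties then follow by the commuting-operator bookkeeping you carry out. Your verification that each $\Proj_i$ maps $\mathcal V_P$ into $\mathcal V_{\mu_i}$, acts as the identity there and kills the other summands, together with the eigenspace-independence argument for uniqueness, is complete and matches the intended argument.
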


On Einstein manifolds that are not Ricci f\/lat it is easily verif\/ied
that the constants $\la_i$, appearing in the expression \nn{powers}
for $P_k$, satisfy $(\la_i=\la_j)\Rightarrow (i=j)$, $i,j\in
\{1,\dots, k/2\}$. Thus we have exactly the situation of the Theorem
above, and it follows that the solution
space for~$P_k$ decomposes directly. (See \cite{GoSiDec} for further
details and \cite{GoSiEforms} for applications as well as a similar
treatment of operators on dif\/ferential forms.) In particular, from the
Theorem and \nn{scp}, we have the following.
\begin{proposition}\label{PEP}
 On  the
interior $M_+$ of a Poincar\'e--Einstein manifold we have
\begin{gather}\label{Pdecomp}
\mathcal{N}(P_k)=\oplus_{i=1}^{k/2}\mathcal{N}(\Delta^{\g}-s_{i}(n-s_{i})),
\end{gather}
where $s_i= \frac{k+n+1-2i}{2}$, for $i=1,\dots ,k$.
\end{proposition}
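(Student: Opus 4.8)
The plan is to reduce the claim to a direct application of Theorem~\ref{fundthm}. The operator $P_k$, restricted to the interior $M_+$ and acting on densities of weight $w_0=(k-n)/2$, is equated in \nn{scp} with a composition of scattering Laplacians $\prod_{i=1}^{k/2}(\Delta^{\g}-s_i(n-s_i))$, where $s_i=(k+n)/2+1-i$ (equivalently $s_i=\tfrac{k+n+1-2i}{2}$ as in the statement). Set $E:=\Delta^{\g}$, regarded as a linear endomorphism of the function space $\mathcal{V}=C^\infty(M_+)$ (after trivialising $\mathcal{E}[\tfrac{k-n}{2}]$ by the parallel density $\si$, exactly as in the passage producing \nn{scp}). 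Then $P_k=P[E]=\prod_{i=1}^{k/2}(E-\mu_i)$ with $\mu_i:=s_i(n-s_i)$, so $P_k$ is a polynomial in the single endomorphism $E$ with the factors mutually commuting automatically.

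The one genuine hypothesis of Theorem~\ref{fundthm} that must be checked is that the scalars $\mu_1,\dots,\mu_{k/2}$ are pairwise distinct. First I would note $\mu_i=s_i(n-s_i)$ with $s_i$ strictly decreasing in $i$; the map $s\mapsto s(n-s)$ is a downward parabola symmetric about $s=n/2$, so $\mu_i=\mu_j$ with $i\neq j$ forces $s_i+s_j=n$. Since $s_i+s_j=(k+n+1-2i)/2+(k+n+1-2j)/2=k+n+1-(i+j)$, this would require $i+j=k+1$. To rule this out I would invoke $\la_\ell={\rm Sc}^{\g}(d+2\ell-2)(d-2\ell)/(4d(d-1))$ from \nn{powers}: the $\mu_i$ are precisely these $\la_\ell$ (in some order), and on an Einstein manifold with ${\rm Sc}^{\g}\neq 0$ — which holds here since a Poincar\'e--Einstein metric has $\Ric(\g)=-n\g$, hence ${\rm Sc}^{\g}=-n(n+1)\neq 0$ for $n\geq 1$ — the factors $(d+2\ell-2)(d-2\ell)$ are pairwise distinct for $\ell\in\{1,\dots,k/2\}$, as already remarked in the paragraph preceding Proposition~\ref{PEP} (``it is easily verified that the constants $\la_i$ \dots satisfy $(\la_i=\la_j)\Rightarrow(i=j)$''). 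So the hypotheses of Theorem~\ref{fundthm} are met.

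Applying Theorem~\ref{fundthm} with $\mathcal{V}_P=\mathcal{N}(P_k)$ and $\mathcal{V}_{\mu_i}=\mathcal{N}(E-\mu_i)=\mathcal{N}(\Delta^{\g}-s_i(n-s_i))$ then yields the canonical direct sum decomposition $\mathcal{N}(P_k)=\bigoplus_{i=1}^{k/2}\mathcal{N}(\Delta^{\g}-s_i(n-s_i))$, with the explicit projectors $\Proj_i=Q_i\prod_{i\neq j}(E-\mu_j)$ given there. This is exactly \nn{Pdecomp}. The main obstacle, such as it is, is purely bookkeeping: confirming that the $\mu_i$ really are distinct (handled above via the sign of the scalar curvature), and making sure the identification of $P_k$ with the composition in \nn{scp} is valid on all of $M_+$ — but the latter is already established in the discussion leading to \nn{scp}, so nothing further is needed.
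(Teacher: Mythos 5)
Your proposal is correct and follows essentially the same route as the paper: identify $P_k$ on $M_+$ with the composition \nn{scp}, check the constants $s_i(n-s_i)$ (equivalently the $\la_\ell$ of \nn{powers}) are pairwise distinct since ${\rm Sc}^{\g}\neq 0$, and apply Theorem~\ref{fundthm}. (Incidentally, your parabola computation already finishes the distinctness check on its own: $\mu_i=\mu_j$ with $i\neq j$ forces $i+j=k+1$, which is impossible for $i,j\in\{1,\dots,k/2\}$, so the appeal to the $\la_\ell$ formula is not needed.)
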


\subsection[Dirichlet-Neumann maps from $P_k$]{Dirichlet--Neumann maps from $\boldsymbol{P_k}$}\label{DNP}

We consider the situation f\/irst for the standard conformal
Dirichlet--to--Robin operator. That is, for the source problem we use the
Yamabe Dirichlet problem $(P_2,\delta_0)$, while for the second part of
the construction \nn{comp} we use the conformal Robin operator
$\delta= n^a\nabla^g_a+\frac{n-1}{2}H^g$, from Section~\ref{Bops}.

Recall that the Einstein scale $\si$ is a def\/ining density for the
boundary $\Sigma$, and along $\Sigma$ we have $n^a\nabla^g_a
\si=n^an_a=1$.  It follows easily \cite{BrGonon} from the conformal
transformation of the mean curvature that one can choose the metric
$g$ on $M$ so that $H^g=0$ (so then $\Sigma $ is totally geodesic).
Let us henceforth use $g$ to mean such a metric.  Consider a possible
solution $u$ to
\begin{gather}\label{P2s}
P_2u=0 \qquad \mbox{of the form}\quad  u=U_{\rm o}+\si U_{\rm i},
\end{gather}
where $U_{\rm o}$ and $U_{\rm i}$ are smooth and $\delta U_{\rm o}|_\Sigma=0$.
Note that $\delta \si=1$ along $\Sigma$.
Given
unique solvability of the source problem, such a solution $u$ would reveal a
conformal Dirichlet--to--Neumann map, from Theorem \ref{psidops}, with
$P_{2,{\bf m}_0,0}(f)=U_{\rm i}|_\Sigma$ where $f= U_{\rm o}|_\Sigma$.

Suppose that $\tau$ is the scale determining $g$,
i.e.\ $g=(\tau)^{-2}\bg$. And set $x:=\si/\tau$.
From Proposition  \ref{PEP} it follows that,
in terms of the scale
$\g$, the problem \nn{P2s} is
 equivalent to
\[\big(\Delta^{\g}-s(n-s)\big) u^{\g}=0 \qquad \mbox{with} \quad
u^{\g}= x^{n-s}U^{g}_{\rm o}+ x^{s} U^{g}_{\rm i},
\] where $s=\frac{n+1}{2}$. Here $u^{\g}$ is the function equivalent
 to the density $u$ with respect to the trivialisation of
 $\mathcal{E}[\frac{1-n}{2}]|_{M_+}$ af\/forded by $\si$, that is
 $u^{\g}=\si^{(n-1)/2}u$. On the other hand $U^{g}_{\rm o}$ and
 $U^{g}_{\rm i}$ are the functions equivalent, via the scale $g$ to,
 respectively, $U_{\rm o}$ and $U_{\rm i}$. For example $U^g_{\rm
 o}=\tau^{(n-1)/2}U_{\rm o}$.  Now according to \cite{GG} (using
 \cite{Gu,GrZ}), provided $s(n-s)$ is not an $L^2$ eigenvalue of
 $\Delta^{\g}$, the Dirichlet problem here is uniquely solved by
 solutions of this form and so $U^{g}_{\rm o}|_{\Sigma}\mapsto
 U^{g}_{\rm i}|_{M_+} $ is the scattering map of Graham--Zworski
 \cite{GrZ}. So this is seen to agree with the map $f\mapsto P_{2,{\bf m}_0,0}(f) $ of Theorem \ref{psidops}. In fact in \cite{GG} they make
 exactly this point: that for $s=(n+1)/2$ the scattering map agrees
 with a Dirichlet--to--Neumann map.

This situation for the higher order GJMS operators is partly similar
as follows. Consider Dirichlet--Neumann operators constructed as in~\cite{BrGonon}, i.e.\ as in Theorem~\ref{psidops}, except using a GJMS
operator $P_k$ as the interior operator (rather than~$\Box_k$). There
is the question of whether there are suitable boundary operators for
$P_k$, to replace the $\delta'_{\bf m}$, of Proposition~\ref{selfad}
and Theorem~\ref{psidops}. Rather than confront this possibly
dif\/f\/icult issue at this point, let us simply assume that there are
such operators and denote these also by $\delta'_{\bf m}$. That is we
will assume that we have all the conditions required for Theorem~\ref{psidops}, with now $P_k$ everywhere replacing $\Box_k$ in that
Theorem. This assumption is not totally outrageous: For $P_2$, as
above, we have $P_2=\Box_2$ and similarly for $P_4$, provided $d\neq
4$, this is satisf\/ied as $P_4$ is a non-zero multiple of
$\Box_4$. Similarly on conformally f\/lat manifolds also this is
satisf\/ied for all $k$ in the range covered in Theorem \ref{psidops},
as again we have that~$P_k$ and~$\Box_k$ agree. (See \cite{GoPetCMP}
for these last facts.)

It will shortly be clear that for comparison with \cite{GrZ}, the
source problem $(P_k,\delta'_{{\bf m}_0})$ (termed the {\em generalised
Dirichlet problem} in \cite{BrGonon}) is relevant. So suppose that this
uniquely solvable. (Note from \nn{Pdecomp} this requires that
$s_i(n-s_i)$ is not an $L^2$ eigenvalue of $\Delta^{\g}$, for $s_i$ as
in Proposition~\ref{PEP}.)  Then with
Dirichlet boundary data as in \nn{SetBdryCond} we get a
Dirichlet--to--Neumann map akin to~$P_{k,{\bf m},m_j}$.  According to
Proposition \ref{PEP}, any solution $u$ is a direct sum
$u=u_1+u_2+\cdots +u_{k/2}$, where $(\Delta^{\g}-s_{\ell}(n-s_{\ell}))
u^{\g}_\ell=0$.  However from this perspective it is not immediately
clear how, in general, to relate the boundary data from the $P_k$
problem to boundary data for the solution~$u^{\g}_{\ell}$ of the
scattering Laplacian. There are several dif\/f\/iculties here. For
example, according to Theorem \ref{fundthm}, the projections $u\mapsto
u_{\ell}$ are administered by dif\/ferential operators which, as given,
do not make sense on $\Sigma$.

In the other direction,  suppose we have a solution $u^{\g}$ to
$(\Delta^{\g}-s_{j}(n-s_{j}))$ ($j\in \{ 1,\dots ,k/2\}$) of the form
\[
u^{\g}= x^{n-s_j} U_{\rm o}^{g}+x^{s_j} U_{\rm i}^g
\]
with $s_j= \frac{k+n-1-2m_j}{2}$ and $m_j\in {\rm m}_0$. (Since $2s_\ell-n$ is
odd and the Poincar\'e--Einstein metric is suitably even, in the sense described
pp.~108--109 of \cite{GrZ}, it follows that the coef\/f\/icient of $G$ in
\cite[Proposition 3.5]{GrZ} vanishes. See the proof of Proposition 4.2
in \cite{GrZ}, or Lemma 4.1 in \cite{Gu}. Thus we expect solutions of
the above form provided that $s_\ell(n-s_\ell)$ is not an $L^2$
eigenvalue of~$\Delta^{\g}$.)  Then using that $x=\si/\tau$ we may
re-express the solution in terms of densities:
\[
u=\si^{m_j}U_{\rm o} + \si^{k-1-m_j}U_{\rm i},
\] with (using \nn{scp}) $u\in \mathcal{E}[\frac{k-d}{2}]$ solving $P_k u=0$
and $U_{\rm o}\in\mathcal{E}[\frac{k-n-2m_j-1}{2}] $ and $U_{\rm i}\in
\mathcal{E}[\frac{-k-n+2m_j+1}{2}] $.  Thus $U_{\rm o}^{g}|_\Sigma$ is the
function equivalent to the conformal density $ U_{\rm o}|_\Sigma=
c\cdot\delta'_{m_j} u|_\Sigma$, for some non-zero constant $c$.  We
also clearly have that $\delta'_{m_i}u|_\Sigma=0$ for integers $i$,
$0\leq i <j$, and $\delta'_{k-1-m_j} (\si^{k-1-m_j}U_{\rm i})|_\Sigma$
is a nonzero constant times $U_{\rm i}|_\Sigma$.  For example if
$j=k/2-1$, then $u$ is the unique solution to the $(P_k,\delta'_{{\bf
m}_0})$ problem with $\delta'_{m_j} u|_\Sigma$ prescribed to agree
with $c^{-1}\cdot  U_{\rm o}|_\Sigma$.  However we cannot in general
say more to compare the scattering map with the Dirichlet--to--Neumann
map without considerable explicit information about the asymptotics of $u$.

It is also clear that this comparison will be sensitive to the details
of the boundary operators~$\delta'_{r}$ used. The observation at the
beginning of Section \ref{scat}, that for most weights $I^AD_A$
recovers the conformal Robin operator $\delta$, suggests the idea that
there are likely to be higher order analogues of $\delta$ (i.e.\
variants of the $\delta'_r$) that are well adapted to the GJMS
operators on Poincar\'e--Einstein manifolds. Since such operators could
signif\/icantly simplify the conformal Dirichlet--to--Neumann construction
and its relationship to the the scattering map it seems that investigating
this possibility should be the next step in the programme.

\section{New directions: translating}\label{trans}

Recall from Section \ref{hypersec} there is a tractor twisting of the
Yamabe operator $\Box:\mathcal{T}^*[1-d/2]\to \mathcal{T}^*[-1-d/2] $. Let us assume
that on some Poincar\'e--Einstein manifold  $(M^d,[g],I)$ with boundary~$\Sigma$ the Dirichlet problem for $\Box: \mathcal{T}^*[1-d/2]\to
\mathcal{T}^*[-1-d/2] $ is uniquely solvable. For example this is the case on
the homogeneous model $(C,[g])$ from Section \ref{model}; since on $C$
the tractor bundle is trivialised by parallel sections this follows
easily from the unique solvability of the density problem, as
discussed in e.g.  \cite{BrGonon}. Since the conformal Robin operator
$\delta$ is also strongly invariant it follows that we may construct a
tractor twisted conformal Dirichlet--to--Neumann map,
\[
P^{\mathcal{T}}_{2,{\bf m}_0,0}:\mathcal{T}^*_\Sigma\left[\frac{1-n}{2}\right]\to
\mathcal{T}^*_\Sigma\left[\frac{-1-n}{2}\right].
\]
Here we are using some key facts. Firstly $\mathcal{T}^*_\Sigma$ may be
identif\/ied with the subbundle of the restriction to $\Sigma$ of some
ambient tractor bundle $\mathcal{T}^*$, and that this subbundle is
characterised by being the part of $\mathcal{T}^*|_\Sigma$ annihilated by any
contraction with the normal tractor $N^A$. Next since $I^A$ is
parallel and recovers $N^A$ along $\Sigma$, it follows from the unique
solvability that any solution $u$ to the Dirichlet problem has any contraction with $I^A$ vanishing everywhere. (Note that any such contraction itself solves
a Yamabe Dirichlet problem, but with Dirichlet data the zero section.)  From
these observations it follows that the map $P^{\mathcal{T}}_{2,{\bf m}_0,0}$
takes values in the bundle~$\mathcal{T}^*_\Sigma[\frac{-1-n}{2}]$.

Next suppose $\mathcal{U}^*$ is an irreducible (conformally weighted) tensor
bundle on $\Sigma$ and there is a~conformal dif\/ferential operator $S:
\mathcal{U} \to \mathcal{T}^*_\Sigma[\frac{1-n}{2}]$ with conformal formal adjoint (as discussed in
e.g.~\cite{BrGodeRham}) $S^*: \mathcal{T}^*_\Sigma\to \mathcal{U}_*$. Here $\mathcal{U}_*$ is
$\mathcal{U}\otimes \mathcal{E}[w]$ where the weight $w$ is such that the natural
pairing of a section of $\mathcal{U}^*$ with a section of $\mathcal{U}_*$, via the
conformal metric, yields a density of weight $-n$. Then we may form
the  composition
\[
\mathcal{U}^*\stackrel{S}{\to}\mathcal{T}^*_\Sigma\left[\frac{1-n}{2}
\right]\stackrel{P^{\mathcal{T}}_{2,{\bf m}_0,0} }{\longrightarrow}
\mathcal{T}^*_\Sigma\left[\frac{-1-n}{2}\right] \stackrel{S^*}{\to} \mathcal{U}_*;
\]
by construction this composition $\mathcal{P}:=S^*\circ P^{\mathcal{T}}_{2,{\bf
    m}_0,0}\circ S $ is conformally invariant. It is easily verif\/ied
that $P^{\mathcal{T}}_{2,{\bf m}_0,0} $ is self-adjoint (by an adaption of the
argument for $P_{2,{\bf m}_0,0}$), so $ \mathcal{P}$ is formally self-adjoint
by construction.

The natural candidates for the operators $S: \mathcal{U}^* \to
\mathcal{T}^*_\Sigma[\frac{1-n}{2}]$ are the so-called dif\/ferential splitting
operators; $S$ is of this form if there is a bundle map $T$ from a
subbundle of $\mathcal{T}^*_\Sigma[\frac{1-n}{2}]$ to $\mathcal{U}^*$ satisfying $
T\circ S=id_{\mathcal{U}^*}$. There is a rich and well developed
theory for the construction of such splitting operators see for
example \cite{Esrni,GoSilKil}; for conformal geometry a general and
practical construction is developed in \cite{Sithesis}, while for an
elegant recent advance which applies to all parabolic geometries see
\cite{CapSouCas}.

Let us illustrate with a simple example. Let $d=4$, take
$\mathcal{T}^*_\Sigma$ to be simply the standard tractor bundle $\mathcal{T}_\Sigma$
on $\Sigma$ and for $\mathcal{U}^*$ take the cotangent bundle $\mathcal{E}^1_\Sigma$.
There is a splitting operator (see e.g.~\cite{Esrni})
\[
E: \ \mathcal{E}^1_\Sigma \to \mathcal{T}_\Sigma[-1] \qquad \mbox{by} \quad \phi_b \mapsto
\left(\begin{array}{c}0\\
\phi_a\\
-\nabla^c\phi_c \end{array}\right).
\]
Thus on the conformal 3-manifold $\Sigma$ we obtain
$\mathcal{P}:\mathcal{E}^1_\Sigma \to \mathcal{E}^1_\Sigma[-1]$ by the
composition $\mathcal{P}= E^* \circ P^{\mathcal{T}}_{2,{\bf m}_0,0}
\circ E$.  To see this is non-trivial we argue as follows.  On the
homogeneous model the Dirichlet and conformal Neumann problems for
$\Box: \mathcal{T}[1-d/2]\to \mathcal{T}[-1-d/2] $ are equivalent to
trivial twistings of the Dirichlet and conformal  Neumann problems for~$P_2$. Thus they are
both properly elliptic normal boundary problem satisfying the
Lopatinski--Shapiro conditions and so each has f\/inite dimensional
kernel. See \cite[Proposition 6.4]{BrGonon} for a summary of the
relevant facts from \cite{grubb,hormander}. It follows immediately
that the composition $P^{\mathcal{T}}_{2,{\bf m}_0,0}\circ E$ is
non-trivial.  Then using that $E$ is $G$-invariant splitting operator
and considering the possible $G={\rm SO}(n+1,1)$ intertwinors between
$\mathcal{E}^1$ and other irreducibles \cite{specgen} it follows
easily that $E^* P^{\mathcal{T}}_{2,{\bf m}_0,0}\circ E$ is
non-trivial. From this point the non-triviality of this operator in
general can be established from the pseudo-dif\/ferential nature of the
operator (it is a composition of dif\/ferential and pseudo-dif\/ferential
operators) and symbol analysis.

It seems likely that a large class of integral order pseudo-dif\/ferential
operators will arise from the construction sketched above.  For
example a non-linear conformal tensorial Dirichlet--to--Neumann map was
announced in \cite{GrDN}; in view of the uniqueness of intertwinors, its
linearisation should be  recoverable using these ideas.

There is scope to develop a
similar translation of the scattering construction to yield tensorial
Dirichlet--to--Neumann maps in the case of non-half integral weights. Here a
key point is that, on the one hand, the operator $I^AD_A$ extends the
scattering Laplacian to the boundary of the Poincar\'e--Einstein
manifold (where it degenerates to a constant times $\delta$), while on
the other it is a strongly invariant operator.  Given a tractor bundle
$\mathcal{T}^*[w]$ (of some possibly complex weight $w$) an idea for extending
data of\/f the boundary is to use sections $u\in\mathcal{T}^*[w]$ satisfying
$I^AD_A u=0$ and satisfying the compatible property that $u$ is
annihilated by any contraction with~$I^A$.  In particular we may seek
solutions of the form $u =\si^z U_{\rm o} + \si^{2w+n-z}U_{\rm i}$
where the tractor bundle sections $U_{\rm o}$ and $U_{\rm i}$ are
smooth, have appropriate weights, and are annihilated by any
contraction with $I^A$ (and as usual $\si$ is the Einstein scale
$\si=h(X,I)$).  The situation is most clear on the homogeneous model
$(C,[g])$ from Section \ref{model}.  Once again using that, in this
case, the tractor bundles are trivialised by a parallel frame it
follows from the density case that the required Poisson operators
exist for a~set of weights dense in $\mathbb{C}$.  In any case, given
a~scattering map $U_{\rm o}|_\Sigma\to U_{\rm i}|_\Sigma$ one may
translate to maps between weighted tensor bundles by composing fore
and aft with dif\/ferential splitting operators as for the construction
above.

\subsection*{Acknowledgements}

ARG gratefully acknowledges support from the Royal Society of New
Zealand via Marsden Grant no.\ 06-UOA-029.  It is a pleasure to thank
Andreas \v Cap, Robin Graham, Colin Guillarmou and Andrew Hassell for
helpful discussions.

\pdfbookmark[1]{References}{ref}
\LastPageEnding

\end{document}